\definecolor{blue(munsell)}{rgb}{0.0, 0.5, 0.69}
\DeclareDocumentCommand\issue{g}{\todo[size=\footnotesize,color = green!40]{Issue\IfNoValueF{#1}{: #1}}}
\DeclareDocumentCommand\tobedone{g}{\todo[size=\footnotesize,color = yellow!50]{To do\IfNoValueF{#1}{: #1}}}
\DeclareDocumentCommand\notationissue{g}{\todo[size=\footnotesize,color = red!30]{Notation?\IfNoValueF{#1}{: #1}}}
\DeclareDocumentCommand\doubt{g}{\todo[size=\footnotesize,color = blue!10]{Doubt\IfNoValueF{#1}{: #1}}}
\DeclareDocumentCommand\observation{g}{\todo[size=\footnotesize,color = orange!10]{Observation\IfNoValueF{#1}{: #1}}}
\g@addto@macro\bfseries{\boldmath}
\theoremstyle{definition}
\newtheorem{thm}{Theorem}[section]
\newtheorem*{thm*}{Theorem}
\newtheorem{prop}[thm]{Proposition}
\newtheorem{cor}[thm]{Corollary}
\newtheorem{defn}[thm]{Definition}
\newtheorem{rem}[thm]{Remark}
\newtheorem{ass}[thm]{Assumption}
\newtheorem{exa}[thm]{Example}
\newtheorem{conj}[thm]{Conjecture}
\newtheorem{notat}[thm]{Notation}
\newcommand\lan{\mathsf{lan}}
\newcommand\ran{\mathsf{ran}}
\newcommand\Set{\operatorname{\bf Set}}
\newcommand\ca{\mathsf {A}}
\newcommand\cb{\mathsf {B}}
\newcommand\cc{\mathsf {C}}
\newcommand\cf{\mathsf {F}}
\newcommand\ff{\mathsf {f}}
\newcommand\gggg{\mathsf {g}}
\newcommand\yy{\mathsf {y}}
\newcommand\cu{\mathsf {U}}
\newcommand\ck{\mathsf {K}}
\newcommand\cl{\mathsf {L}}
\newcommand\cn{\mathsf {N}}
\newcommand\crr{\mathsf {R}}
\newcommand\ct{\mathsf {T}}
\title{Codensity: Isbell duality, pro-objects, compactness and accessibility}
\author{Ivan Di Liberti$^\dag$}
\thanks{$^\dag$ The author has been supported through the grant 19-00902S from the Grant Agency of the Czech Republic.}
\address{
Ivan \textsc{Di Liberti}: \newline
Department of Mathematics and Statistics\newline
Masaryk University, Faculty of Sciences\newline
Kotl\'{a}\v{r}sk\'{a} 2, 611 37 Brno, Czech Republic\newline
\href{mailto:diliberti@math.muni.cz}{\sf diliberti@math.muni.cz}
}
\begin{document}

\begin{abstract}
We study codensity monads $\ct$ induced by (mostly small, mostly dense) full subcategories $\ca \subset \ck$. These monads behave quite similarly, we show some connections with the Isbell duality, pro-finite objects and compact spaces. We prove that they are quite unlikely to be accessible. Finally, we introduce the notion of generically idempotent monad and comment its properties.
\end{abstract}
\maketitle
\setcounter{tocdepth}{1}
\tableofcontents

\section{Introduction}

The general theory of codensity monads was quite well developed in \cite{leinster2013codensity}, even if they appear in the literature in various instances and different flavors. Codensity monads can be defined in several ways, in this article we shall have a strong preference for the following:

\begin{defn}[Codensity monad]
Let $\ff : \ca \to \cb$ be a functor for which the right Kan extension $\ran_{\ff} \ff$ exists, then the universal property of the Kan extension induces naturally a monad structure $\ct_\ff = (\ran_{\ff} \ff, \eta, \mu)$, which is called the \textbf{codensity monad} of $\ff$. A description of the unit and the multiplication can be found in \cite{leinster2013codensity}[Sec. 5].
\begin{center}
\begin{tikzcd}
\ca \arrow[dd, "\ff"'] \arrow[rr, "\ff"]                         &  & \cb \\
                                                                 &  &     \\
\cb \arrow[rruu, "\ran_\ff \ff" description, dashed, bend right=20] &  &    
\end{tikzcd}
\end{center}
\end{defn} 

\cite{leinster2013codensity} provides equivalent definitions and beautiful insights. We address the reader to Leinster's paper for an introduction rich of examples and remarks. \cite{devlin2016codensity} provides a much more complete and terse introduction to this subject, including a concrete description of the unit and the counit \cite{devlin2016codensity}[Chap. 5]. Our treatment will be very far from being concrete and takes very seriously the presentation of \cite{leinster2013codensity}[Sec. 5], pushing \textit{the formal perspective} on codensity monads as far as possible.

\begin{rem}
There are several examples of codensity monads, indeed the Semantics-Structure adjunction \[\mathsf{Sem}: \mathsf{Mon}^\circ(\cb) \leftrightarrows \mathsf{Cat}_{/\cb}: \mathsf{Str} \]\cite{kanextensiondubuc}[pg. 74] proves that every monad $\ct: \ck \to \ck$ is the codensity monad of its forgetful functor $\mathsf{U}_\ct : \mathsf{Alg}(\ct) \to \ck$. For this reason, being the codensity monad of \textit{some} functor is not a deep property for a monad. Things change dramatically when one studies \textbf{codensity monads of full subcategories} $\ca \subset \ck$. Those will be our object of interest. 
\end{rem}

\begin{notat} \label{notation}
Given a functor $\ff: \ca \to \cb$ we will call $\ct_\ff$ the monad structure naturally induced on the endofunctor $\ran_\ff \ff$. Given a monad $\ct$ we will call $\cf_\ct \dashv \cu_\ct$ the induced free-forgetful adjunction over its category of algebras. In the specific case of a codensity monad we might refer to $\cf_{\ct_\ff} \dashv \cu_{\ct_\ff}$ as $\cf_{\ff} \dashv \cu_{\ff}$ to use a more clean notation. This means for example that \[\ran_\ff \ff = \ct_\ff =   \cu_\ff \circ \cf_\ff. \]
\end{notat}

\begin{defn} We say that a monad is \textbf{generically idempotent} if it is the codensity monad of a full subcategory up to natural isomorphism of monads.
\end{defn}

\begin{exa}[Idempotent monads are generically idempotent]
Let us justify this definition with an example. Recall that a monad is idempotent when its multiplication $\mu: \ct^2 \to \ct$ is an isomorphism. It is well known that an idempotent monad $\ct: \cb \to \cb$ individuates a reflective subcategory $\cu_\ct: \mathsf{Alg}(\ct) \hookrightarrow \cb$. Using the Structure-Semantics adjunction, we have that \[\ran_{\cu_\ct} \cu_\ct \cong \ct, \] this shows that $\ct$ is the codensity monad of its full subcategory of algebras, that is: idempotent monads are generically idempotent.
\end{exa}

\subsection{Motivating examples}

The reader should not think that generically idempotent monads are idempotent, or very close to be idempotent. Most of the relevant examples of generically idempotent monads are not idempotent at all. We list a couple examples, these display most of the properties that we would like to understand of codensity monads of full (relevant) subcategories.

\begin{center}

\begin{tabular}{lllll}
 Name & Monad   &  Subcategory & Algebras  & Reference  \\
 Ultrafilter & $\mathcal{U}: \Set \to \Set$  & $\mathsf{FinSet} \subset \Set$  &  CompT$_2$\footnotemark[1] & \cite{leinster2013codensity}    \\
 Double dual & $(\_)^{**}: \mathsf{Vect} \to \mathsf{Vect}$ & $\mathsf{FinVect} \subset \mathsf{Vect}$& LinComp\footnotemark[2] &  \cite{leinster2013codensity}     \\
 Giry & $\mathsf{G}: \mathsf{Meas} \to \mathsf{Meas}$ & $\mathsf{Convex} \subset  \mathsf{Meas}$  & Prob\footnotemark[3] & \cite{avery2016codensity}   
\end{tabular}
\end{center}

\footnotetext[1]{Compact Hausdorff spaces.}
\footnotetext[2]{Linearly Compact Spaces.}
\footnotetext[3]{Probability spaces.}
All of the previous examples are quite celebrated and understood\footnote[4]{The last one has recently proved to be less understood than expected. The interested reader should give a look to \cite{2017arXiv170700488S,2019arXiv190700372S,2019arXiv190703209S}.} in mathematics and we choose not to introduce them. \cite{leinster2013codensity} is a perfect introduction for the non-expert reader. We will deal with four main questions and hopefully show that the theory of Kan extensions is the perfect tool to handle this kind of questions.

\subsection{Questions}
\begin{enumerate}{}
\item What is the precise link between codensity monads and \textbf{double dualization}?
\item Why do some algebras for a codensity monad very often look like \textbf{pro-finite objects}?
\item Is there any relation between the algebras for a codensity monad and \textbf{compact Hausdorff spaces}?
\item Why are codensity monads often  \textbf{not accessible}?
\item Can we \textbf{characterize} codensity monads of full subcategories?
\end{enumerate}

We claim that it is possible to describe a general theory of codensity monads that have this kind of properties and those coincide with generically idempotent monads.

\subsubsection{Codensity and Isbell duality} In section \ref{isbellsec} we answer to the first two questions putting codensity monads in the context of Isbell duality. This perspective appear implicitly already in \cite{leinster2013codensity}[Sec 2.]. Isbell duality is a deep example of dual adjunction. There are several evidences that it is related with many dualities of the form $\mathsf{Alg}^\circ \cong \mathsf{Geom}$. We find that unveiling the connection with the Isbell duality has itself a conceptual content, moreover, even if the answer to the first question was essentially already in \cite{leinster2013codensity}, the answer to the second question was far from being there and is deeply based on the connection with Isbell duality. 

\subsubsection{Codensity and compact Hausdorff spaces}
It happens quite often that Pro-finite objects admit an additional structure of compact Hausdorff space. That's the case of pro-finite sets, pro-finite groups, pro-finite lattices. In Sec. \ref{compactness} we show that the algebras for the codensity monad of \textit{finite stuff} in a category of \textit{stuff} always have such a structure. This propagates to pro-finite objects using the result of  Sec.\ref{isbellsec}. We also discuss the possible mismatch between the notions of finite and finitely presentable object.

\subsubsection{Codensity and accessibility}
This section deals with preservation of colimits. We show that very often codensity monads cannot be cocontinuous, moreover when they are, they are willing to be the identity. They are more likely to be accessible, but we show that there are strong obstructions also to that.

\subsubsection{Characterization of generically idempotent monads}
In the last section we justify the definition of generically idempotent monad showing that a monad is generically idempotent if and only if it is idempotent on a limit-dense subcategory (in a proper sense) and conjecture a characterization of generically idempotent monads.

\begin{rem} The paper contains an Appendix collecting useful facts about Kan extensions. We make an extensive use of the results contained in the Appendix and they represent the core of our technical toolbox.
\end{rem}


\section{Isbell duality}\label{isbellsec}

This section sets a link between codensity monads of \textbf{full small subcategories} $\ca \subset \ck$ and Isbell duality.  In order to do so we have to recast Isbell duality in a suitable language that will deliver the main theorem (\ref{main}) as a trivial corollary.

\begin{thm*}[\ref{main}]
The codensity monad of the Yoneda embedding is isomorphic to the monad induced by the Isbell adjunction, \[ \ct_\yy \cong \mathsf{Spec} \circ \mathcal{O}. \]
\end{thm*}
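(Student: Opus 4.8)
The plan is to reduce the statement to the standard pointwise formula for the right Kan extension $\ran_\yy\yy$, after first recasting the two halves of the Isbell adjunction in the language of weighted limits and nerves. Once this recasting is in place the isomorphism of underlying endofunctors is immediate, and the compatibility with the monad structures becomes the only point requiring genuine care.

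First I would write down the pointwise expression for the codensity endofunctor. Since $\ca$ is small and $\widehat{\ca}=[\ca^\opp,\Set]$ is complete, the right Kan extension exists and is computed pointwise by the end
\[ \ct_\yy(X)\;=\;\ran_\yy\yy\,(X)\;=\;\int_{a\in\ca}(\yy a)^{\widehat{\ca}(X,\,\yy a)}, \]
where the exponent denotes the cotensor of the representable $\yy a$ by the set $\widehat{\ca}(X,\yy a)$. The recasting step is to observe that the two factors appearing here are exactly the two functors of Isbell duality. Writing $\mathcal{O}\colon\widehat{\ca}\to[\ca,\Set]^\opp$ for the Isbell conjugate $\mathcal{O}(X)=\widehat{\ca}(X,\yy{-})$, I would record the two clean identities $\mathsf{Spec}(Y)=\int_{a}(\yy a)^{Y(a)}$ (so that $\mathsf{Spec}$ sends a weight $Y$ to the $Y$-weighted limit of $\yy$, equivalently is the nerve of the co-Yoneda embedding $a\mapsto\ca(a,-)$) and $\mathcal{O}\circ\yy\cong(a\mapsto\ca(a,-))$. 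Substituting the first identity with weight $Y=\mathcal{O}(X)=\widehat{\ca}(X,\yy{-})$ into the end above yields $\ct_\yy(X)\cong\mathsf{Spec}(\mathcal{O}(X))$ at once; the evaluation-at-$b$ check $\ct_\yy(X)(b)=\int_a\Set(\mathcal{O}(X)(a),\ca(b,a))=[\ca,\Set](\mathcal{O}(X),\ca(b,-))=\mathsf{Spec}(\mathcal{O}(X))(b)$ confirms it and simultaneously verifies the adjunction $\mathcal{O}\dashv\mathsf{Spec}$.

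The main obstacle — and the part I expect to absorb most of the work — is upgrading this to an isomorphism of monads, that is, matching the units and the multiplications. I would handle this conceptually rather than by grinding through ends: the recasting exhibits $\mathsf{Spec}$ as a nerve / pointwise right Kan extension functor, and forces $\mathcal{O}=\lan_\yy(a\mapsto\ca(a,-))$ as its (essentially unique) left adjoint, so the adjunction $\mathcal{O}\dashv\mathsf{Spec}$ is itself governed by the universal property of Kan extension along $\yy$. Consequently the unit of the adjunction coincides with the unit of the codensity monad, and the multiplication $\mathsf{Spec}\,\epsilon\,\mathcal{O}$ coincides with the multiplication induced on $\ran_\yy\yy$ by the Kan-extension universal property, so the monad isomorphism falls out of the recasting rather than from an explicit diagram chase. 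Alternatively one can invoke the universal characterisation of the codensity monad as the terminal monad through which $\yy$ factors and simply check that the Isbell monad enjoys this property.

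The only auxiliary points I would keep honest are the existence and pointwise-ness of the Kan extension — guaranteed by smallness of $\ca$ and completeness of $\widehat{\ca}$, the large target causing no trouble since all the relevant limits are small — and the naturality of the coends involved; both of these I would relegate to the Kan-extension lemmas collected in the Appendix.
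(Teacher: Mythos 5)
Your argument is correct, but it reaches the isomorphism by a genuinely more concrete route than the paper does. The paper first identifies $\mathcal{O}=\ran_\yy\yy^\sharp$ and $\mathsf{Spec}=\ran_{\yy^\sharp}\yy$ (Rem.~\ref{isbell}) and then concludes in a single line from the formal composition law for right Kan extensions (Prop.~\ref{pointwiseran}): $\mathsf{Spec}\circ\mathcal{O}=\ran_{\yy^\sharp}\yy\circ\ran_\yy\yy^\sharp\cong\ran_\yy\yy$. Your substitution of the weight $\mathcal{O}(X)=\Set^{\ca^\circ}(X,\yy-)$ into the end $\int_a(\yy a)^{Y(a)}$, together with the evaluation check $\ct_\yy(X)(b)\cong[\ca,\Set](\mathcal{O}(X),\ca(b,-))$, is precisely the pointwise shadow of that composition law, so you are in effect reproving the one instance of \ref{pointwiseran} you need rather than quoting it. What your version buys is transparency: the reader sees the actual limit formula, and the same computation simultaneously establishes the adjunction $\mathcal{O}\dashv\mathsf{Spec}$. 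What the paper's version buys is reusability: the identical formal two-line pattern recurs elsewhere (Prop.~\ref{swap}, Rem.~\ref{Leinster}) with no ends in sight. On the monad structures, you correctly flag this as the delicate point; the paper's proof is in fact silent about it, and your sketch --- that the adjunction is itself governed by the universal property of Kan extension along $\yy$, so its induced unit and multiplication agree with those of $\ran_\yy\yy$, or alternatively that one invokes the terminal characterisation of the codensity monad among monads acting on $\yy$ --- is a legitimate way to close that gap and goes beyond what the paper writes down. One cosmetic caution: the codensity monad is terminal among monads equipped with an \emph{action} on $\yy$ (a $\yy$-module structure), not merely among monads ``through which $\yy$ factors''; you should state it that way if you take that route.
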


 The Isbell duality is a syntax-semantics kind of duality that was introduced by Lawvere in \cite{lawvere1986taking}, where the author credits Isbell \cite{isbell} for the general idea. The core of the section is the following motto: \textit{the Isbell duality is the theory of the codensity monad of the Yoneda embedding}. We dedicate the following subsection to recast Isbell duality. 

 \begin{rem}
 After the first version of this paper was submitted on the arXiv, Georges Charalambous informed the author that he has independently shown the enriched version of the restricted Isbell adjunction (\ref{restrictedisbell}) in an unpublished work \cite{charalambous} of 2016 together with some other similar results contained in this section.
 \end{rem}

\subsection{A brief introduction to Isbell duality}
\begin{rem}[The (co)presheaf construction]
Let $\ca$ be a small category. We will denote by $\yy_\ca: \ca \to \Set^{\ca^\circ}$ the usual presheaf construction, aka the Yoneda embedding. Analogously, we call $\yy_\ca^\sharp: \ca \to (\Set^{\ca})^\circ$ the copresheaf construction. Observe that $\yy_\ca^\sharp$ coincides precisely with $(\yy_{\ca^\circ})^\circ$. Also, recall that these constructions are the free completion under (co)limits of $\ca$\footnote[5]{Be careful, the presheaf construction is the completion under colimits, while the co-presheaf construction is the completion under limits.}. Both these categories are complete and cocomplete. We feel free to use the abuse of notation $\yy_\ca = \yy$, when it does not generate any confusion.
\end{rem}

\begin{rem}[The (co)nerve construction]
Given a span where $\cb$ is a cocomplete category and $\ca$ is small, there is an andjunction
\begin{center}
\begin{tikzcd}
\ca \arrow[ddd, "\yy"'] \arrow[rrr, "\ff"]                                       &  &  & \cb \arrow[lllddd, "{\lan_\ff \yy}" description, dashed, bend left=15] \\
                                                                             &  &  &                                                             \\
                                                                             &  &  &                                                             \\
\Set^{\ca^\circ} \arrow[rrruuu, "\lan_{\yy} \ff" description, dashed, bend left=15] &  &  &                                                            
\end{tikzcd}
\end{center}
$\lan_{\yy} \ff \dashv \lan_{\ff} \yy$ where the right adjoint coincides with the functor $\cb(\ff \_, \_)$. $\lan_{\ff} \yy$ is called \textit{the nerve of} $f$, while $\lan_{\yy} \ff$ is called \textit{geometric realization}. This construction was introduced by Kan in \cite{10.2307/1993103}[Sec 3] in the special case of simiplicial sets, hence the name. A proof that the couple $(\lan_{\yy} \ff , \lan_{\ff} \yy)$ are adjoint functors can be found in many references and belongs to the realm of formal category theory, we will include a proof of it in the Appendix \ref{lan_ylan_f}.

 Analogously to the covariant case, when $\cb$ is complete, the conerve construction is the adjunction provided in the following diagram by the right Kan extensions.

\begin{center}
\begin{tikzcd}
\ca \arrow[ddd, "\yy^\sharp"'] \arrow[rrr, "\ff"]                                       &  &  & \cb \arrow[lllddd, "{\ran_\ff \yy^\sharp}" description, dashed, bend left=15] \\
                                                                             &  &  &                                                             \\
                                                                             &  &  &                                                             \\
(\Set^{\ca})^\circ \arrow[rrruuu, "\ran_{\yy^\sharp} \ff" description, dashed, bend left=15] &  &  &                                                            
\end{tikzcd}
\end{center}
Observe that in this case the adjunction switches left with right: $\ran_{\yy^\sharp} f \vdash \ran_{\ff} \yy^\sharp$, and  $\ran_{\ff} \yy^\sharp$ corresponds to $\cb(\_, \ff \_)$. 
\end{rem}

\begin{rem}[Isbell duality] \label{isbell}
The Isbell duality \cite{lawvere1986taking}[7] is a special case of the nerve construction, when the relevant span is the following.
\begin{center}
\begin{tikzcd}
                                                                & \ca \arrow[ldd, "\yy"'] \arrow[rdd, "\yy^\sharp"] &                                                                   \\
                                                                &                                             &                                                                   \\
\Set^{\ca^\circ} \arrow[dashed, rr, "\mathcal{O}" description, bend left=15] &                                             & (\Set^\ca)^\circ \arrow[dashed, ll, "\mathsf{Spec}" description, bend left=15]
\end{tikzcd}
\end{center}
Applying the nerve construction, one gets the adjunction $\lan_{\yy} \yy^\sharp \dashv \lan_{\yy^\sharp} \yy$, while the (co)nerve construction gives $\ran_{\yy} \yy^\sharp \dashv \ran_{\yy^\sharp} \yy$. Very surprisingly, these two adjunctions are in fact the same. In order to see this, one can observe that $\ran_{\yy} \yy^\sharp$ and $\lan_{\yy} \yy^\sharp $ are indeed the same functor (up to natural isomorphism) because they are both cocontinuous and assume the same value on representables, \[\ran_{\yy} \yy^\sharp \circ  \yy \stackrel{\ref{simplify}}{\cong} \yy^\sharp \]
\[\lan_{\yy} \yy^\sharp \circ  \yy \stackrel{\ref{simplify}}{\cong} \yy^\sharp. \]
Thus we feel free to define $\mathcal{O}$ as $ \lan_\yy \yy^\sharp$ and $\mathsf{Spec} = \ran_{\yy^\sharp} \yy$.  As a result of the previous discussion we get the adjunction: \[\Set^{\ca^\circ}(\_, \yy \_) =  \lan_\yy \yy^\sharp = \ran_\yy \yy^\sharp = \mathcal{O} \dashv \mathsf{Spec} = \ran_{\yy^\sharp} \yy= \lan_{\yy^\sharp} \yy = \Set^{\ca}(\yy^\sharp,  \_). \]
\end{rem}

\begin{prop}[Isbell swaps adjunctions]\label{swap} Let $i: \ca \subset \ck$ be a small full dense subcategory of a cocomplete category. Then, the induced adjunction $\lan_\yy i = \cl : \Set^{\ca^\circ} \leftrightarrows \ck : \crr = \lan_i \yy$ turns into an adjunction \[\mathcal{O} \circ \crr  \dashv \cl \circ \mathsf{Spec}\] when composed with the Isbell duality.
\end{prop}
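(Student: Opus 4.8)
The plan is to identify the two composites with the two legs of the (co)nerve adjunction of $i$, so that the asserted adjunction becomes an instance of a construction already at hand, rather than a formal composite of the two given adjunctions (which, as explained below, does \emph{not} go through naively).

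First I would simplify $\mathcal{O}\circ\crr$. Writing $\crr=\lan_i\yy=\ck(i-,-)$ and $\mathcal{O}=\Set^{\ca^\circ}(-,\yy-)$ as in \ref{isbell}, one has $(\mathcal{O}\crr\,k)(a)=\Set^{\ca^\circ}(\ck(i-,k),\yy a)$ for $k\in\ck$ and $a\in\ca$. Two inputs do the work: since $i$ is fully faithful, $\yy a=\ca(-,a)=\ck(i-,ia)=\crr(ia)$; and since $\ca$ is dense, the nerve $\crr$ is fully faithful, whence $\Set^{\ca^\circ}(\crr k,\crr(ia))\cong\ck(k,ia)$. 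Therefore $\mathcal{O}\circ\crr\cong\ck(-,i-)=\ran_i\yy^\sharp$, the conerve of $i$, which is the \emph{left} adjoint in the conerve construction. (The same conclusion follows by feeding the density colimit $k\cong\int^{b}\ck(ib,k)\cdot ib$ into the simplification lemma \ref{simplify}.)

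Second I would simplify $\cl\circ\mathsf{Spec}$. From the description $\mathsf{Spec}=\Set^\ca(\yy^\sharp,-)$ of \ref{isbell} and co-Yoneda for copresheaves one computes $\mathsf{Spec}(s)(a)\cong\Set^\ca(s,\ca(a,-))\cong\int_{b}\Set(s(b),\ca(a,b))\cong\int_{b}\ck(ia,ib)^{s(b)}$. I would then recognise $\mathsf{Spec} s$ as a nerve: setting $\{s,i\}=\int_{b}(ib)^{s(b)}=\ran_{\yy^\sharp}i(s)$ for the realization (weighted limit) of $i$ by $s$, the same end computation gives $\crr(\{s,i\})(a)=\ck(ia,\{s,i\})\cong\int_b\ck(ia,ib)^{s(b)}\cong\mathsf{Spec}(s)(a)$, i.e. $\mathsf{Spec} s\cong\crr\{s,i\}$. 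As $\crr$ is fully faithful the counit $\cl\crr\cong\id$ holds, so $\cl\mathsf{Spec} s\cong\cl\crr\{s,i\}\cong\{s,i\}=\ran_{\yy^\sharp}i(s)$; that is, $\cl\circ\mathsf{Spec}\cong\ran_{\yy^\sharp}i$, the \emph{right} adjoint in the conerve construction.

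With both identifications the statement is immediate, since the conerve construction supplies the adjunction $\ran_i\yy^\sharp\dashv\ran_{\yy^\sharp}i$, i.e. $\mathcal{O}\circ\crr\dashv\cl\circ\mathsf{Spec}$; equivalently one verifies the hom-isomorphism directly as $(\Set^\ca)^\circ(\mathcal{O}\crr\,k,s)=\Set^\ca(s,\ck(k,i-))\cong\ck(k,\{s,i\})\cong\ck(k,\cl\mathsf{Spec} s)$, the middle step being the universal property of the weighted limit $\{s,i\}$. The step I expect to be the real obstacle is precisely what makes the phrase ``composed with the Isbell duality'' delicate: $\crr$ is a right adjoint while $\mathcal{O}$ is a left adjoint, so $\mathcal{O}\circ\crr$ is a priori neither adjoint, and the naive manipulation using only $\cl\dashv\crr$ and $\mathcal{O}\dashv\mathsf{Spec}$ stalls (it would demand the false relation $\crr\dashv\cl$). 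The swap is legitimate only because $\mathsf{Spec} s$ is in fact a nerve, equivalently because the realization $\{s,i\}=\ran_{\yy^\sharp}i(s)$ exists; to guarantee this I would invoke that a cocomplete category with a small dense subcategory is total, hence complete, which both secures the existence of $\ran_{\yy^\sharp}i$ and pins down the identification $\cl\circ\mathsf{Spec}\cong\ran_{\yy^\sharp}i$.
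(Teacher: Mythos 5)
Your proof is correct, but it takes a genuinely different route from the paper's. The paper invokes the characterization of adjoints via Kan extensions (\ref{left adjoint exist}) and computes $\lan_{\mathcal{O}\circ\crr}(\id_\ck)$ by a purely formal chain --- density ($\id_\ck\cong\lan_i i$), composition of left Kan extensions, the identities $\crr\circ i\cong\yy$ and $\mathcal{O}\circ\yy\cong\yy^\sharp$, and the pointwise decomposition \ref{pointwiselan} --- arriving at $\cl\circ\mathsf{Spec}$; it then omits the remaining verification that $\mathcal{O}\circ\crr$ preserves this Kan extension. You instead identify \emph{both} composites concretely with the two legs of the conerve adjunction of $i$: an end computation plus full faithfulness of $i$ and of the nerve $\crr$ gives $\mathcal{O}\circ\crr\cong\ck(-,i-)=\ran_i\yy^\sharp$, and the recognition of $\mathsf{Spec}\,s$ as the nerve of the weighted limit $\ran_{\yy^\sharp}i(s)$ gives $\cl\circ\mathsf{Spec}\cong\ran_{\yy^\sharp}i$, after which the adjunction is just $\ran_i\yy^\sharp\dashv\ran_{\yy^\sharp}i$. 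Your version is more computational but also self-contained: it proves the hom-isomorphism outright rather than leaving a preservation condition unchecked, and it makes explicit where completeness of $\ck$ (via totality) is used. It also anticipates the content of Remark \ref{Leinster}, where the paper itself shows that $\cl\circ\mathsf{Spec}$ agrees with $\ran_{\yy^\sharp}i$ on representables. Your closing observation that the naive composition of the two adjunctions would require the false relation $\crr\dashv\cl$ is exactly the point of the paper's remark that the result is non-trivial. Both approaches are sound; the paper's buys brevity from the formal Kan-extension calculus, yours buys completeness and an explicit description of the two functors involved.
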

\begin{rem}
Observe that this is a non-trivial result because we are composing right adjoint with left ones. Also, observe that the left adjoint becomes a right one and vice versa.
\end{rem}
\begin{proof}
We use the characterization in \ref{left adjoint exist}. We will just show that $\lan_{\mathcal{O} \circ \crr} (\mathsf{id}_\ck)$ coincides with $\cl \circ \mathsf{Spec}$ and we omit the rest of the proof. The proof is a straight line of isomorphisms.
\begin{align*} 
\lan_{\mathcal{O} \circ \crr} (\mathsf{id}_\ck)  \cong &  \lan_{\mathcal{O} \circ \crr} (\lan_i i) \quad & \text{by }\ref{density} \\ 
 \cong & \lan_{\mathcal{O} \circ \crr \circ i} i   \quad & \text{by }\ref{compose}\\
\cong & \lan_{\mathcal{O} \circ \yy} i  \\
\cong &  \lan_{\yy^\sharp} i \quad & \text{by }\ref{simplify} \\
 \cong &  \lan_{\yy} i \circ \lan_{\yy^\sharp} \yy \quad & \text{by }\ref{pointwiselan}\\
\cong &  \cl \circ \mathsf{Spec}
\end{align*}

\end{proof}

\subsection{Isbell duality and double dualization}

 Finally, we come to the core of this section, we show the connection between Isbell duality and codensity monads. 
 
\begin{thm}\label{main}
The codensity monad of the Yoneda embedding is isomorphic to the monad induced by the Isbell adjunction. \[ \ct_\yy = \mathsf{Spec} \circ \mathcal{O}. \]
\end{thm}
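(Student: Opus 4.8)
The plan is to compute the composite $\mathsf{Spec}\circ\mathcal{O}$ by hand and recognise it as $\ran_\yy\yy$, in the same telescoping style as the proof of Proposition~\ref{swap}. Unwinding Remark~\ref{isbell} we have $\mathcal{O}=\ran_\yy\yy^\sharp$ and $\mathsf{Spec}=\ran_{\yy^\sharp}\yy$, so that
\[\mathsf{Spec}\circ\mathcal{O}\;=\;\ran_{\yy^\sharp}\yy\;\circ\;\ran_\yy\yy^\sharp.\]
This reads as a composite of two a priori unrelated right Kan extensions, and the key move is to absorb the right-hand factor into the base of the left-hand one.

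First I would slide $\mathsf{Spec}$ inside the extension $\ran_\yy(-)$. Since the presheaf categories involved are complete, $\ran_\yy\yy^\sharp$ is a pointwise right Kan extension; and $\mathsf{Spec}$, being the right adjoint of the Isbell adjunction $\mathcal{O}\dashv\mathsf{Spec}$, is continuous, so it commutes with the pointwise extension (the right-handed dual of \ref{pointwiselan}):
\[\mathsf{Spec}\circ\ran_\yy\yy^\sharp\;\cong\;\ran_\yy\!\left(\mathsf{Spec}\circ\yy^\sharp\right).\]
It is essential here that $\mathsf{Spec}$ is \emph{genuinely} a right adjoint rather than a bare right Kan extension, which is precisely the miraculous coincidence $\lan_\yy\yy^\sharp\cong\ran_\yy\yy^\sharp$ of Remark~\ref{isbell}. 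Next I would simplify the functor being extended: since $\yy^\sharp$ is fully faithful, the restriction of its right Kan extension recovers it, so $\mathsf{Spec}\circ\yy^\sharp=\ran_{\yy^\sharp}\yy\circ\yy^\sharp\cong\yy$ by \ref{simplify}. Substituting gives the statement in one line,
\[\mathsf{Spec}\circ\mathcal{O}\;\cong\;\ran_\yy\!\left(\mathsf{Spec}\circ\yy^\sharp\right)\;\cong\;\ran_\yy\yy\;=\;\ct_\yy.\]

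I expect the main obstacle to be promoting this natural isomorphism of endofunctors to an isomorphism of \emph{monads}, as the statement really demands. Concretely I must check that the composite isomorphism carries the unit $\Id\to\ran_\yy\yy$ and the multiplication $(\ran_\yy\yy)^2\to\ran_\yy\yy$ read off from the universal property of the Kan extension (as in \cite{leinster2013codensity}[Sec.~5]) to the unit $\eta$ and the multiplication $\mathsf{Spec}\,\varepsilon\,\mathcal{O}$ of the adjunction monad, $\varepsilon$ being the Isbell counit. The clean way to organise this is to use that $\yy\cong\mathsf{Spec}\circ\yy^\sharp$ exhibits the Yoneda embedding as factoring through the right adjoint $\mathsf{Spec}$, and to verify that the mate of the Kan-extension counit under this factorization is the transpose of the adjunction counit; tracking these mates through the triangle identities should force the unit and multiplication maps to coincide, so that the displayed isomorphism is automatically a morphism of monads.
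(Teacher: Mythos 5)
Your argument is correct and is essentially the paper's own proof: the paper simply cites Proposition~\ref{pointwiseran} (that $\ran_\gggg\ff\cong\ran_{\yy^\sharp}\ff\circ\ran_\gggg\yy^\sharp$) applied to $\ff=\gggg=\yy$, and your two steps --- sliding the right adjoint $\mathsf{Spec}$ inside the pointwise extension via \ref{preserve} and then collapsing $\ran_{\yy^\sharp}\yy\circ\yy^\sharp\cong\yy$ via \ref{simplify} --- are exactly the proof of that lemma, inlined. Your closing worry about upgrading the isomorphism of endofunctors to one of monads is legitimate, but the paper does not carry out that verification either, so you are not missing anything relative to its argument.
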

\begin{proof}
We showed enough that this is relatively trivial. It is enough to apply \ref{pointwiseran} to \ref{isbell}. \[\mathsf{Spec} \circ \mathcal{O} \stackrel{\ref{isbell}}{=} \ran_{\yy^\sharp} \yy  \circ \ran_\yy \yy^\sharp \stackrel{\ref{pointwiseran}}{\cong} \ran_{\yy} \yy. \]
\end{proof}

\begin{rem}[The Isbell duality induces (almost) every codensity monad]\label{Leinster} Finally we can tell precisely where our work meets that of \cite{leinster2013codensity}[Sec 2.]. Let $i: \ca \subset \ck$ \textbf{be a small full dense subcategory of a cocomplete}\footnote[6]{This also implies that $\ck$ is complete. It can be shown in several ways, direct and indirect one, we choose the combination of \cite{WOOD1982538}[Thm. 7] and \cite{CTGDC_1986__27_2_109_0}[5.6].} \textbf{category}.  Under these assumptions the nerve construction $\lan_\yy i = \cl : \Set^{\ca^\circ} \leftrightarrows \ck : \crr = \lan_i \yy$  presents $\ck$ as a reflective subcategory of $\Set^{A^\circ}$. Indeed $\ca$ is dense if and only if the nerve is fully faithful \cite{isbell1960}[1.8]. We can put this information together with the Isbell duality relative to $\ca$, as shown by the diagram below.

\begin{center}
\begin{tikzcd}
\ca \arrow[dddd, "i" description] \arrow[rrdd, "\yy" description] \arrow[rrrr, "\yy^\sharp" description]             &  &                                                                                                                  &  & (\Set^\ca)^\circ \arrow[lldd, "\mathsf{Spec}" description, bend right] \\
                                                     &  &                                                                                                                  &  &                                                            \\
                                                     &  & \Set^{\ca^\circ} \arrow[lldd, "\cl" description, bend right] \arrow[rruu, "\mathcal{O}" description, bend right] &  &                                                            \\
                                                     &  &                                                                                                                  &  &                                                            \\
K \arrow[rruu, "{\ck(i\_,\_)}" description, bend right] &  &                                                                                                                  &  &                                                           
\end{tikzcd}
\end{center}
As a consequence of Prop. \ref{swap}, we have that \[\mathcal{O} \circ  \ck(i\_,\_) \dashv \mathsf{L} \circ \mathsf{Spec}. \]  We claim that the monad $\mathsf{L} \circ \mathsf{Spec} \circ \mathcal{O} \circ  \ck(i\_,\_)$ induced over $\ck$ by this adjunction is precisely the codensity monad of the inclusion of $\ca$ in $\ck$. In order to show it, we use Prop. \ref{pointwiselan}, namely, \[\ran_i i \cong \ran_{\yy^\sharp} i \circ \ran_i \yy^{\sharp},\] and we show that the right adjoint $\ran_{\yy^\sharp} i$ coincides with the right adjoint $\mathsf{L} \circ \mathsf{Spec}$ on representables, hence the thesis. Indeed,

\[\ran_{\yy^\sharp} i \circ \yy^\sharp \cong i, \]
\[\mathsf{L} \circ \mathsf{Spec} \circ \yy^\sharp = \lan_\yy i \circ \ran_{\yy^\sharp} \yy \circ \yy^\sharp \stackrel{\ref{simplify}}{\cong} i. \]

This shows that the codensity monad of a full, small, dense subcategory in a cocomplete category is always induced by the Isbell duality via conjugation along the reflection \[\ct_i \cong \mathsf{L} \circ \mathsf{Spec} \circ \mathcal{O} \circ  \ck(i\_,\_).\]
This observation appears (very) implicitely  in \cite{leinster2013codensity}[Sec 2.], where the connection with Isbell duality is hidden by a more concrete presentation of the adjunction.
\end{rem}

\begin{rem}[On the ubiquity of Isbell]
The previous remark shows the ubiquity of the Isbell duality in the context of codensity monads of \textit{relevant} subcategories. Also, since this duality can be considered to be the archetypical example of double dualization, we feel that having this connection spelled out in details fills a gap in the exihisting literature and clarifies in which sense codensity monads are related to double dualizations. In this direction the recent paper \cite{2019arXiv190904950A} inspects the connection between this conceptual double dualization and a concrete double dualization induced by a monoidal closed structure on $\ck$.
\end{rem}

\subsection{Isbell duality and pro-objects}
In this subsection, let $\ck$ be a locally finitely presentable category and $\ck_\omega$ its full subcategory of finitely presentable objects\footnote[7]{Recall that this category is essentially small.} $i: \ck_\omega \subset \ck$. The aim of this section is to explain why some algebras of the codensity monad of $i$ looks like objects in the pro-completion of $\ck_\omega$.

\begin{exa}
The easiest example of this pattern, and indeed all the others look alike, is the category of sets. Indeed Set is locally finitely presentable and its finitely presentable objects are finite sets. It is well known that the algebras for the codensity monad $i: \Set_\omega \hookrightarrow \Set$ are compact Hausdorff spaces. In that case \textit{totally disconnected compact Hausdorff spaces are precisely pro-finite sets}. This shows that there is a very well characterized full subcategory of compact Hausdorff spaces that is equivalent to $\mathsf{Pro} \Set_\omega$.
\end{exa}

In the spirit of the previous example, we will provide an adjunction, \[\mathsf{Pro} \ck_\omega \leftrightarrows \mathsf{Alg}(\mathsf{T}_i) \] under the restriction that $\ck_\omega$ has  finite limits. Note that this hypothesis is verified in the case of finite sets. In order to do so, we have to give a restricted version of the Isbell duality.

\begin{rem}[Prolegomena to the restricted Isbell duality]
This remark is highly based on the theory of locally presentable and accessible categories, we mostly refer to \cite{adamekrosicky94}, the reader that is not familiar with the definitions is encouraged to read this reference and \cite{GUcentazzo}. Let $\ca$ be a small category with \textbf{finite limits and finite colimits}. Recall that in this case $\mathsf{Ind} \ca$ is a locally finitely presentable category \cite{GUcentazzo}[Th. 3.1]. The pro-completion $\mathsf{Pro} \ca$ of $\ca$ coincides with $(\mathsf{Ind} \ca^\circ)^\circ$ \cite{kashiwara2005categories}[6, pg 138] and thus is a full subcategory of the free completion of $A$ under limits. Being the opposite category of a locally presentable category, it is complete and cocomplete. In the diagram below we set the notation that we use for the rest of the section.

\begin{center}
\begin{tikzcd}
\mathsf{Ind} \ca \arrow[dd, "j" description]     &                                                                                                                                         & \mathsf{Pro} \ca \arrow[dd, "j^\sharp" description] \\
                                      & A \arrow[ld, "\yy" description] \arrow[rd, "\yy^\sharp" description] \arrow[lu, "i" description] \arrow[ru, "i^\sharp" description] &                                                 \\
\Set^{A^\circ} \arrow[rr, "\mathcal{O}" description, bend right=15] &                                                                                                                                         & (\Set^A)^\circ \arrow[ll, "\mathsf{Spec}" description, bend right=15]          
\end{tikzcd}
\end{center}
\end{rem}

\begin{rem}
Recall that $j = \lan_i \yy$, while $j^\sharp$ coincides with $\ran_{i^\sharp} \yy^\sharp$. Moreover, $\mathsf{Ind} \ca$ is reflective in the free completion $\Set^{\ca^\circ}$ , while the $\mathsf{Pro} \ca $ is coreflective in $(\Set^A)^\circ$. Also, recall that in the very special case in which $\ca$ has finite colimits, $\mathsf{Ind} \ca$ can be described as $\mathsf{Lex}(\ca^\circ, \Set)$ \cite{GUcentazzo}[2.10 (2)], similarly, $\mathsf{Pro} \ca = \mathsf{Lex}(\ca, \Set)^\circ$.
\end{rem}

\begin{prop}[Restricted Isbell duality]\label{restrictedisbell}
When $\ca$ has finite limits and finite colimits, the Isbell duality restricts to an adjunction between $\mathsf{Ind} \ca$ and $\mathsf{Pro} \ca$, \[ \mathscr{O}: \mathsf{Ind} \ca \leftrightarrows \mathsf{Pro} \ca : \mathscr{S}. \] Moreover $\mathscr{S}$ can be lifted to a functor $\cc: \mathsf{Pro} \ca  \to \mathsf{Alg}(\mathsf{T_i})$ in the sense clarified by the following diagram.

\begin{center}
\begin{tikzcd}
\mathsf{Alg}\ct_i \arrow[ddd, "\cu_{\ct_i}" description]                                                           &  &                                                                                                \\
                                                                                                           &  &                                                                                                \\
                                                                                                           &  &                                                                                                \\
\mathsf{Ind}\ca \arrow[uuu, "\cf_{\ct_i}" description, bend left] \arrow[rr, "\mathscr{O}" description, bend right=15] &  & \mathsf{Pro}\ca \arrow[ll, "\mathscr{S}" description, bend right=15] \arrow[lluuu, "\mathsf{C}" description, dashed]
\end{tikzcd}
\end{center}

\end{prop}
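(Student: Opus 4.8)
The plan is to prove the two assertions in turn: first that the Isbell conjugation restricts to the subcategories $\mathsf{Ind}\ca$ and $\mathsf{Pro}\ca$, and then that its right adjoint factors through the Eilenberg--Moore category of $\ct_i$. Throughout I use the descriptions recalled just above the statement: since $\ca$ has finite limits and finite colimits, $\mathsf{Ind}\ca \simeq \mathsf{Lex}(\ca^\circ, \Set)$ and $\mathsf{Pro}\ca \simeq \mathsf{Lex}(\ca, \Set)^\circ$, so that $\mathsf{Ind}\ca$ is carved out as the presheaves sending finite colimits of $\ca$ to limits, and $\mathsf{Pro}\ca$ as the copresheaves preserving finite limits.

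For the restriction, the key observation is that the two Isbell functors land in these subcategories automatically, for every object. Indeed $\mathcal{O}(P) = \Set^{\ca^\circ}(P, \yy\,\_)$ preserves finite limits in its argument, because $\yy$ preserves all limits and $\Set^{\ca^\circ}(P, \_)$ is continuous; hence $\mathcal{O}(P)$ is a lex copresheaf, i.e. an object of $\mathsf{Pro}\ca$. Dually, $\mathsf{Spec}(Q)(a) = \Set^\ca(Q, \ca(a, \_))$ turns finite colimits of $a$ into limits, so $\mathsf{Spec}(Q)$ lies in $\mathsf{Lex}(\ca^\circ, \Set) = \mathsf{Ind}\ca$. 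Thus $\mathcal{O}(\mathsf{Ind}\ca) \subseteq \mathsf{Pro}\ca$ and $\mathsf{Spec}(\mathsf{Pro}\ca) \subseteq \mathsf{Ind}\ca$, and since both subcategories are full, the adjunction $\mathcal{O} \dashv \mathsf{Spec}$ restricts along full subcategories closed under the two functors; this yields $\mathscr{O} \dashv \mathscr{S}$, with $\mathscr{O}$ and $\mathscr{S}$ the (co)restrictions.

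For the lift, I would invoke the Eilenberg--Moore comparison: the adjunction $\mathscr{O} \dashv \mathscr{S}$ induces a monad $\mathscr{S}\mathscr{O}$ on $\mathsf{Ind}\ca$ together with a canonical comparison functor $\cc\colon \mathsf{Pro}\ca \to \mathsf{Alg}(\mathscr{S}\mathscr{O})$ satisfying $\cu_{\mathscr{S}\mathscr{O}} \circ \cc = \mathscr{S}$ by construction. It therefore suffices to identify $\mathscr{S}\mathscr{O}$ with $\ct_i$ as monads. Here $i\colon \ca \hookrightarrow \mathsf{Ind}\ca$ is a small full dense subcategory (the finitely presentable objects) of a cocomplete category, so Remark \ref{Leinster} applies and exhibits $\ct_i$ as the monad of the adjunction $\mathcal{O}\circ j \dashv \mathsf{L}\circ\mathsf{Spec}$, where $j = \ck(i\_,\_)$ is the fully faithful nerve realising $\mathsf{Ind}\ca$ as a reflective subcategory of $\Set^{\ca^\circ}$ with reflector $\mathsf{L}$. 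By the restriction just established the composite $\mathcal{O}\circ j$ factors as $j^\sharp \circ \mathscr{O}$ through the coreflective inclusion $j^\sharp\colon \mathsf{Pro}\ca \hookrightarrow (\Set^\ca)^\circ$; writing $\mathsf{R}$ for its coreflector (so $j^\sharp \dashv \mathsf{R}$ and $\mathsf{R} j^\sharp \cong \id$), the Leinster adjunction is precisely the composite of $\mathscr{O}\dashv\mathscr{S}$ with the coreflection $j^\sharp \dashv \mathsf{R}$. Composing an adjunction with a coreflection leaves the induced monad unchanged, so its monad is $\mathscr{S}\,\mathsf{R}\,j^\sharp\,\mathscr{O} \cong \mathscr{S}\mathscr{O}$; by Remark \ref{Leinster} this monad is $\ct_i$, whence $\mathscr{S}\mathscr{O}\cong\ct_i$ as monads. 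Consequently $\mathsf{Alg}(\mathscr{S}\mathscr{O}) = \mathsf{Alg}(\ct_i)$ and $\cc$ is the required lift.

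The main obstacle is to carry out this last identification at the level of monad \emph{structures} rather than of mere endofunctors: one must check that the units and multiplications genuinely agree, and the coreflection argument above is exactly what supplies this, being the formal counterpart of Theorem \ref{main} now internal to $\mathsf{Ind}\ca$ and $\mathsf{Pro}\ca$. Alternatively, the same conclusion can be reached by mirroring the proof of Theorem \ref{main} directly: one identifies $\mathscr{O} \cong \ran_i i^\sharp$ and $\mathscr{S} \cong \ran_{i^\sharp} i$ (these agree with $i^\sharp$, respectively $i$, on the dense subcategory $\ca$ by \ref{simplify}) and then applies \ref{pointwiseran} to obtain $\ct_i = \ran_i i \cong \ran_{i^\sharp} i \circ \ran_i i^\sharp \cong \mathscr{S}\mathscr{O}$. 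The delicate point of that route is verifying the pointwise hypothesis of \ref{pointwiseran} and the cocontinuity needed to pin down the restricted Kan extensions, which is why I prefer the coreflection argument.
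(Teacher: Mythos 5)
Your proposal is correct and follows essentially the same route as the paper: Step 1 is the identical hom-set computation showing $\mathcal{O}$ and $\mathsf{Spec}$ land in the lex (co)presheaves, and Step 2 is the same appeal to the universal property of the Eilenberg--Moore adjunction. The only difference is that you spell out, via Remark \ref{Leinster} and the coreflection $j^\sharp \dashv \mathsf{R}$, why the monad induced by $\mathscr{O}\dashv\mathscr{S}$ is $\ct_i$ as a monad --- a point the paper simply asserts --- and that added detail is sound.
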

\begin{proof}
\begin{itemize}
	\item[]
	\item[Step 1] Let's start by constructing the adjunction. Since both $\mathsf{Ind} \ca$  and $\mathsf{Pro} \ca $ can be identified as the subcategory of some functors preserving a family of limits, it is enough to show that $\mathcal{O}(F)$ preserves any family of limits for any $F$. As surprising as it may sound, this is a straightforward verification, given the operative definition of $\mathcal{O}$.  Obviously the dual result can be shown for $\mathsf{Spec}$.

$$
  \begin{aligned}
    \mathcal{O}(X)(\lim c_j)
    & := \Set^{\ca^\circ}(X,\yy(\lim c_j))
    \\
    & \cong \Set^{\ca^\circ}(X, \lim \yy( c_j))
    \\
    & \cong  \lim  \Set^{\ca^\circ}(X, \yy( c_j))
    \\
    & =: \lim \mathcal{O}(X)( c_j).
  \end{aligned}
  $$
  \item[Step 2] Now we dedicate to the functor $\cc$. We know that the restricted Isbell duality $\mathscr{O} \dashv \mathscr{S}$ induces a monad over $\mathsf{Ind}(\ca)$, that is precisely the codensity monad of the inclusion $i: \ca \to \mathsf{Ind}(\ca)$. Since free-forgetful adjunction $\mathsf{Alg}\ct_i$ is terminal among those adjunctions that induce the restricted Isbell duality, there exists a diagonal functor lifting $\mathscr{S}$ along $\cu_{\ct_i}$.

	\end{itemize}
\end{proof}




Recall the notations of this subsection, let $\ck$ be a locally finitely presentable category and $\ck_\omega$ its full subcategory of finitely presentable objects $i: \ck_\omega \subset \ck$. We are now ready to provide an adjunction $\mathsf{Pro} \ck_\omega \leftrightarrows \mathsf{Alg}(\mathsf{T}_i),$ as in the example of compact Hausdorff spaces.

\begin{cor}\label{procodensity2}
Let $\ck$ be a locally finitely presentable category such that $\ck_\omega$ is closed under finite limits, then there is an adjunction between the algebras for the codensity monad over $i: \ck_\omega \hookrightarrow \ck$ and pro-(finitely presentable) objects,

 \[ \mathsf{Alg}(\mathsf{T}_i) \leftrightarrows  \mathsf{Pro} \ck_\omega: C\]
\end{cor}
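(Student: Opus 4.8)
The plan is to reduce the statement to the restricted Isbell duality of Proposition \ref{restrictedisbell} together with the adjoint triangle theorem, so that no genuinely new analytic input is needed. First I would use that $\ck$ is locally finitely presentable to identify $\ck \simeq \mathsf{Ind}\,\ck_\omega$ via the canonical comparison; under this equivalence the inclusion $i\colon \ck_\omega \hookrightarrow \ck$ becomes exactly the inclusion $\ca \hookrightarrow \mathsf{Ind}\,\ca$ of Proposition \ref{restrictedisbell} with $\ca = \ck_\omega$ (a small skeleton, by footnote 7). Now $\ck_\omega$ is automatically closed under finite colimits in an lfp category, and by hypothesis it is closed under finite limits; hence $\ck_\omega$ has finite limits and finite colimits and Proposition \ref{restrictedisbell} applies verbatim. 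This yields at one stroke the restricted Isbell adjunction $\mathscr{O} \dashv \mathscr{S}$ between $\ck \simeq \mathsf{Ind}\,\ck_\omega$ and $\mathsf{Pro}\,\ck_\omega$, together with the lift $C := \cc\colon \mathsf{Pro}\,\ck_\omega \to \mathsf{Alg}(\ct_i)$ satisfying $\cu_{\ct_i}\circ C = \mathscr{S}$. The whole corollary is thereby reduced to the single claim that this $C$ is a right adjoint.

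To produce the left adjoint I would invoke Dubuc's adjoint triangle theorem applied to the commutative triangle $\cu_{\ct_i}\circ C = \mathscr{S}$, whose hypotheses are precisely the ingredients already in hand: $\cu_{\ct_i}$ is monadic (it is the forgetful functor of an Eilenberg--Moore category), the composite $\mathscr{S} = \cu_{\ct_i}\circ C$ carries the left adjoint $\mathscr{O}$, and the base $\mathsf{Pro}\,\ck_\omega$ is cocomplete (being the opposite of a locally presentable category), so the reflexive coequalizers used in the construction exist. Concretely the left adjoint $L$ is forced on free algebras, where $L\,\cf_{\ct_i} \cong \mathscr{O}$ because for $B \in \ck$ and $P \in \mathsf{Pro}\,\ck_\omega$ the free--forgetful adjunction and restricted Isbell give $\mathsf{Alg}(\ct_i)(\cf_{\ct_i}B, C P) \cong \ck(B, \mathscr{S}P) \cong \mathsf{Pro}\,\ck_\omega(\mathscr{O}B, P)$; and on a general algebra $A$ one sets $LA := \coeq\!\big(\mathscr{O}\,\cu_{\ct_i}\cf_{\ct_i}\cu_{\ct_i}A \rightrightarrows \mathscr{O}\,\cu_{\ct_i}A\big)$, using the canonical presentation of $A$ as a reflexive coequalizer of free algebras.

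The hom-set bijection $\mathsf{Pro}\,\ck_\omega(LA,P) \cong \mathsf{Alg}(\ct_i)(A, C P)$ is then purely formal: applying $\mathsf{Alg}(\ct_i)(-, C P)$ to the coequalizer presentation of $A$ turns it into an equalizer, the free-algebra computation above rewrites each term as a $\mathsf{Pro}$-hom out of $\mathscr{O}(-)$, and continuity of $\mathsf{Pro}\,\ck_\omega(-,P)$ reassembles these into $\mathsf{Pro}\,\ck_\omega(LA,P)$. I expect the only delicate point to be bookkeeping rather than substance: checking that $\cu_{\ct_i}$ is genuinely monadic and that $C$ lifts $\mathscr{S}$ with the correct algebra structures, so that the triangle commutes strictly once the equivalence $\ck \simeq \mathsf{Ind}\,\ck_\omega$ is fixed, since all the real difficulty has been absorbed into Proposition \ref{restrictedisbell}. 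As a sanity check, for $\ck = \Set$ and $\ck_\omega = \mathsf{FinSet}$ this recovers the reflective inclusion of profinite sets (Stone spaces) into compact Hausdorff spaces, with $L$ the totally-disconnected reflection $X \mapsto \pi_0(X)$.
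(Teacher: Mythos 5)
Your reduction is the same as the paper's: identify $\ck \simeq \mathsf{Ind}\,\ck_\omega$ via \cite{adamekrosicky94}[Thm 1.46], note that $\ck_\omega$ automatically has finite colimits, and invoke Proposition \ref{restrictedisbell}. Where you genuinely add something is in the second half: the paper's proof stops at ``this follows immediately from the previous proposition,'' but Proposition \ref{restrictedisbell} only delivers the functor $C\colon \mathsf{Pro}\,\ck_\omega \to \mathsf{Alg}(\ct_i)$ lifting $\mathscr{S}$, not the asserted adjunction, and the naive candidate $\mathscr{O}\circ\cu_{\ct_i}$ is \emph{not} left adjoint to $C$ in general. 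Your appeal to Dubuc's adjoint triangle theorem is exactly the right way to close this gap: $\cu_{\ct_i}$ is monadic, $\cu_{\ct_i}\circ C \cong \mathscr{S}$ has the left adjoint $\mathscr{O}$, and $\mathsf{Pro}\,\ck_\omega$ has reflexive coequalizers (being the opposite of a locally presentable category, it is cocomplete, as the paper itself records), so $C$ has a left adjoint computed on free algebras by $L\,\cf_{\ct_i}\cong\mathscr{O}$ and in general by the reflexive coequalizer you write down (note $\cu\cf = \ct_i \cong \mathscr{S}\mathscr{O}$, so your parallel pair is the standard one built from the algebra structure map and the counit of $\mathscr{O}\dashv\mathscr{S}$). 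One cosmetic remark: the triangle need only commute up to natural isomorphism, which is all Proposition \ref{restrictedisbell} provides, and that suffices for the adjoint triangle theorem. Your sanity check is also apt --- for $\ck=\Set$ the left adjoint is the reflection of compact Hausdorff spaces onto Stone spaces via connected components. In short, your argument is correct and supplies the construction of the left adjoint that the paper's one-line proof leaves entirely implicit.
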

\begin{proof}
This follows immediately from the previous proposition. Indeed when $\ck$ is locally finitely presentable, $\mathsf{Ind}(\ck_\omega)$ is equivalent to $\ck$, \cite{adamekrosicky94}[Thm 1.46].
\end{proof}

\subsection{$\mathsf{Ind}_\lambda$ and $\mathsf{Pro}_\lambda$ }

There is no reason to think that the restricted Isbell duality is specific to the case of finite limits, the adjunction extends to the $\mathsf{Ind}_\lambda$-completion precisely in the same way of the subsection above.

\begin{prop}[Restricted Isbell duality]\label{generalized restricted}
When $\ca$ is $\lambda$-complete and $\lambda$-cocomplete, the Isbell duality restricts to an adjunction between $\mathsf{Ind}_\lambda  \ca$ and $\mathsf{Pro}_\lambda  \ca$. \[ \mathscr{O}_\lambda: \mathsf{Ind}_\lambda \ca \leftrightarrows \mathsf{Pro}_\lambda \ca : \mathscr{S}_\lambda \]
\end{prop}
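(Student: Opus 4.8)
The plan is to transcribe the proof of Proposition \ref{restrictedisbell} word for word, replacing $\omega$ by $\lambda$; the hypotheses "$\lambda$-complete and $\lambda$-cocomplete" enter at exactly one point, namely the identification of the two completions as categories of continuous functors. First I would record the $\lambda$-analogues of the descriptions used in the finite case. Since $\ca$ is $\lambda$-cocomplete, one has $\mathsf{Ind}_\lambda \ca \simeq \lambda\text{-}\mathsf{Lex}(\ca^\circ,\Set)$, the full subcategory of $\Set^{\ca^\circ}$ spanned by the functors $\ca^\circ \to \Set$ preserving $\lambda$-small limits; dually, since $\ca$ is $\lambda$-complete, $\mathsf{Pro}_\lambda \ca \simeq \lambda\text{-}\mathsf{Lex}(\ca,\Set)^\circ$, the functors $\ca \to \Set$ preserving $\lambda$-small limits, viewed inside $(\Set^\ca)^\circ$. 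These are the $\lambda$-generalisations of the facts cited from \cite{GUcentazzo}[2.10] in the remark preceding \ref{restrictedisbell}, and can be found in \cite{adamekrosicky94}.

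With these descriptions fixed, the restriction is forced by a general principle: an adjunction $\mathcal{O} \dashv \mathsf{Spec}$ on the ambient categories restricts to an adjunction between two full subcategories as soon as $\mathcal{O}$ carries $\mathsf{Ind}_\lambda \ca$ into $\mathsf{Pro}_\lambda \ca$ and $\mathsf{Spec}$ carries $\mathsf{Pro}_\lambda \ca$ back into $\mathsf{Ind}_\lambda \ca$, for then the adjunction bijection $(\Set^\ca)^\circ(\mathcal{O}X, Y) \cong \Set^{\ca^\circ}(X, \mathsf{Spec}\, Y)$ restricts on the nose to these full subcategories, yielding $\mathscr{O}_\lambda \dashv \mathscr{S}_\lambda$. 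So the whole content of the proposition is the pair of closure statements, and I would verify the first exactly as in Step 1 of \ref{restrictedisbell}: for any $X$ and any $\lambda$-small diagram $(c_j)$ in $\ca$,
\[
\mathcal{O}(X)(\lim c_j) := \Set^{\ca^\circ}(X, \yy(\lim c_j)) \cong \Set^{\ca^\circ}(X, \lim \yy(c_j)) \cong \lim \Set^{\ca^\circ}(X, \yy(c_j)) =: \lim \mathcal{O}(X)(c_j),
\]
using that $\yy$ preserves limits and that $\Set^{\ca^\circ}(X,-)$ preserves limits. This in fact shows that $\mathcal{O}(X)$ preserves \emph{every} limit existing in $\ca$, so a fortiori it preserves $\lambda$-small ones and lands in $\lambda\text{-}\mathsf{Lex}(\ca,\Set) \simeq \mathsf{Pro}_\lambda \ca$. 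The closure statement for $\mathsf{Spec}$ is entirely dual, obtained by running the same computation with the roles of $\ca$ and $\ca^\circ$ — hence of $\yy$ and $\yy^\sharp$, and of $\mathcal{O}$ and $\mathsf{Spec}$ — interchanged.

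The step I expect to carry the real weight is the first one, the $\lambda$-Gabriel--Ulmer identifications of $\mathsf{Ind}_\lambda \ca$ and $\mathsf{Pro}_\lambda \ca$ as categories of $\lambda$-continuous functors: this is the only place where $\lambda$-completeness and $\lambda$-cocompleteness of $\ca$ are genuinely consumed, and it is the precise analogue of the remark that earlier licensed describing $\mathsf{Ind}\,\ca$ as $\mathsf{Lex}(\ca^\circ,\Set)$. By contrast, the two closure computations are insensitive to $\lambda$ altogether, since they establish preservation of all limits; they therefore transcribe from the finite case without any change, which is exactly the sense in which the adjunction extends "precisely in the same way."
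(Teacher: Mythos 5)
Your proposal is correct and is exactly the argument the paper intends: the paper gives no separate proof of \ref{generalized restricted}, stating only that the adjunction "extends precisely in the same way," and your transcription of Step 1 of \ref{restrictedisbell} (showing $\mathcal{O}(X)$ preserves all limits existing in $\ca$, hence $\lambda$-small ones, with the dual for $\mathsf{Spec}$) together with the $\lambda$-Gabriel--Ulmer identifications is that same argument made explicit. Your observation that the closure computations are insensitive to $\lambda$ and that the hypotheses are consumed only in identifying $\mathsf{Ind}_\lambda\ca$ and $\mathsf{Pro}_\lambda\ca$ as categories of $\lambda$-continuous functors is an accurate account of where the weight lies.
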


\begin{rem}[No-go equivalence] \label{nogo}
Observe that this duality cannot be an equivalence of categories, in fact  $\mathsf{Pro}_\lambda  \ca$ is the opposite of a locally presentable category, and thus cannot be locally presentable itself. The same argument works also for the presheaf-version of the Isbell duality. 
\end{rem}

\begin{cor}\label{noidentity} Let $\ca$ be a $\lambda$-complete and $\lambda$-cocomplete category. Then the following cannot happen simultaneously:
\begin{enumerate}
\item  $i :  \ca  \to \mathsf{Ind}_\lambda  \ca$ is codense.
\item  $i^\sharp :  \ca  \to \mathsf{Pro}_\lambda  \ca$ is dense.
\end{enumerate}
\end{cor}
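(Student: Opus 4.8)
The plan is to derive a contradiction from the simultaneous validity of (1) and (2) by showing that they would force the restricted Isbell adjunction $\mathscr{O}_\lambda \dashv \mathscr{S}_\lambda$ of Proposition \ref{generalized restricted} to be an adjoint equivalence, which Remark \ref{nogo} forbids. So the whole argument is a packaging of the no-go remark through the dictionary between (co)density and invertibility of the (co)unit.

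First I would record the $\lambda$-analogue of Theorem \ref{main} together with its formal dual, exactly along the lines of Remark \ref{Leinster}. Namely, the monad induced on $\mathsf{Ind}_\lambda \ca$ by $\mathscr{O}_\lambda \dashv \mathscr{S}_\lambda$ is, \emph{as a monad}, the codensity monad $\ct_i = \ran_i i$ of $i : \ca \to \mathsf{Ind}_\lambda \ca$; dually, the comonad induced on $\mathsf{Pro}_\lambda \ca$ is the density comonad $\lan_{i^\sharp} i^\sharp$ of $i^\sharp : \ca \to \mathsf{Pro}_\lambda \ca$ (this is the Isbell duality applied to $\ca^\circ$, which swaps $\mathsf{Ind}_\lambda$ with $\mathsf{Pro}_\lambda$ and $\mathscr{O}_\lambda$ with $\mathscr{S}_\lambda$). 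The point of insisting on a \emph{monad}, resp. \emph{comonad}, isomorphism is that the unit $\eta : \id \to \mathscr{S}_\lambda \mathscr{O}_\lambda$ of the adjunction is then identified with the canonical comparison $\id \to \ran_i i$, and the counit $\varepsilon : \mathscr{O}_\lambda \mathscr{S}_\lambda \to \id$ with the canonical comparison $\lan_{i^\sharp} i^\sharp \to \id$.

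Next I translate the two hypotheses. By definition $i$ is codense exactly when the comparison $\id \to \ran_i i$ is invertible, and $i^\sharp$ is dense exactly when $\lan_{i^\sharp} i^\sharp \to \id$ is invertible. Through the identifications above, (1) says that the unit $\eta$ is an isomorphism and (2) says that the counit $\varepsilon$ is an isomorphism. By the standard triangle-identity argument, an invertible unit forces the left adjoint $\mathscr{O}_\lambda$ to be fully faithful and an invertible counit forces the right adjoint $\mathscr{S}_\lambda$ to be fully faithful; with both invertible, $\mathscr{O}_\lambda \dashv \mathscr{S}_\lambda$ becomes an adjoint equivalence $\mathsf{Ind}_\lambda \ca \simeq \mathsf{Pro}_\lambda \ca$. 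This is precisely what Remark \ref{nogo} rules out, since $\mathsf{Ind}_\lambda \ca$ is locally presentable while $\mathsf{Pro}_\lambda \ca$ is the opposite of a locally presentable category; hence (1) and (2) cannot hold at once.

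The only delicate point is the bookkeeping of the first step: one must check that the abstract functor isomorphisms coming from the (restricted) main theorem are genuinely compatible with the (co)monad structures, so that $\eta$ and $\varepsilon$ really correspond to the (co)density comparisons and not merely to \emph{some} natural isomorphism. This compatibility is exactly the monad-isomorphism content already asserted in Theorem \ref{main}, transported to the $\mathsf{Ind}_\lambda/\mathsf{Pro}_\lambda$ setting and dualized; everything downstream is then the formal two-line deduction above.
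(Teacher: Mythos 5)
Your argument is correct and is essentially the paper's own proof, just with the bookkeeping made explicit: the paper likewise observes that the codensity monad of $i$ and the density comonad of $i^\sharp$ are the two composites of the restricted Isbell adjunction, so that (1) and (2) together would make that adjunction an equivalence, contradicting Remark \ref{nogo}. The extra care you take in checking that the identifications respect the (co)monad structures (so that (co)density really corresponds to invertibility of the unit and counit) is a worthwhile elaboration of a step the paper leaves implicit, but it is not a different route.
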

\begin{proof}
It would contradict Rem. \ref{nogo}. In fact the monad and the codensity monad represent precisely the two possible compositions of the adjoint functors involved in the Isbell duality. If both of them are the identity (up to natural isomorphism), the adjunction is an equivalence.
\end{proof}

\begin{rem}[As sharp as it can be]\label{sharp}
Originally we were hoping to give a stronger statement, namely that in the hypotheses of Cor. \ref{noidentity} it is never that case that $i :  \ca  \to \mathsf{Ind}_\lambda  \ca$ is codense. This is not true, in fact given  an inaccessible cardinal $\lambda$  such that every lambda-complete ultrafilter is trivial, the full subcategory $\Set_\lambda$ of those sets of cardinality smaller than $\lambda$  is codense in $\Set$, this result is due to Isbell and a reference can be found in \cite{adamekrosicky94}[A.5]. This observation was pointed out by Jiří Rosický in a private communication.
\end{rem}

\section{Compactness}\label{compactness}

Let $\ck$ be a category of algebraic structures, say groups for the sake of simplicity. In this section we prove that the algebras for the codensity monads of finite structures $\ck_{\text{fin}}$ admit a structure of compact Hausdorff space. This intuition is very vague and should be contextualized. Given a category $\ck$, there is no natural notion of \textit{finite} object. Moreover, it would be wrong to choose the \textit{finitely presentable} as a candidate notion of \textit{finite}, this will be discussed at the end of the section. In the following remark we clarify what we mean by \textit{finite} object.

\begin{ass} In this section we work in the following assumptions. Let \[\cf: \Set \leftrightarrows  \ck : \cu \] be an adjunction where $\cu$ is the right adjoint. Denote by $\ck_{\text{fin}}$ the full subcategory of objects $k$ such that $\cu(k)$ is a finite set. We will use the following names for the corresponding functors,

\begin{center}
\begin{tikzcd}
\ck_{\text{fin}} \arrow[rr, "u" description] \arrow[dd, "j" description] &  & \text{FinSet} \arrow[dd, "i" description] \\
                                                                         &  &                     \\
\ck  \arrow[rr, "\cu" description]                                       &  & \Set                
\end{tikzcd}
\end{center}
\end{ass}

\begin{rem}
In the category of groups, finite objects in the sense of the previous remark correspond to finite groups when we choose $\cu$ to be the forgetful functor. Finitely presentable objects, instead, correspond to finitely presented groups in the sense of Universal Algebra. Yet, in many locally finitely presentable categories this notion of finite matches to the notion of finitely presentable, that's the case of sets, join semilattices, graphs...
\end{rem}

\begin{rem}
Let $\ck$ be a locally finitely presentable category with an object $k$ such that $\ck(k,\_): \ck \to \Set$ is faithful, strongly finitely accessible\footnote[8]{Recall, this means that the functor preserves directed colimits and finitely presentable objects.} and reflects finitely presentable objects, then we can assume that the two notions of \textit{finite} and \textit{finitely} presentable coincide. Such a $k$ would be a very special kind of finitely presentable object, indeed being finitely presentable only means that $\ck(k,\_)$ is finitely accessible.  This condition is met in all the examples mentioned at the end of the remark above, and clearly is not met in the case of groups. In the language of Universal Algebra this pattern can be found in the case of locally finite varieties.
\end{rem}

\begin{thm} \label{compactnessthm} The category of algebras of $\ct_j$ admits a functor $\Omega$ to compact Hausdorff spaces lifting the composition $\cu_j\cu$ along $\cu_i$. Moreover, if $\cu$ is faithful or conservative, so does the functor $\Omega$. Also, in this case $\Omega$ preserves limits\footnote[9]{In this theorem, we use the short form $\cu_{f}$ for the notation $\cu_{\ct_f}$ as explained in Not. \ref{notation}.}.

\end{thm}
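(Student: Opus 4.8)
The plan is to realize $\Omega$ as the functor between Eilenberg--Moore categories induced by a morphism of monads sitting over $\cu$. First I would recall, following \cite{leinster2013codensity}, that $\ct_i = \ran_i i$ is the ultrafilter monad and that $\cu_i\colon \mathsf{Alg}(\ct_i)\to\Set$ is, up to equivalence, the forgetful functor from compact Hausdorff spaces; thus producing a functor $\Omega\colon \mathsf{Alg}(\ct_j)\to\mathsf{Alg}(\ct_i)$ with $\cu_i\circ\Omega = \cu\circ\cu_j$ is exactly the asserted lift of $\cu\cu_j$ along $\cu_i$. The engine is the standard fact that a natural transformation $\lambda\colon \ct_i\cu\Rightarrow\cu\ct_j$ compatible with the two units and the two multiplications (a lax morphism of monads $(\cu,\lambda)$) sends an algebra $a\colon \ct_j k\to k$ to the $\ct_i$-algebra $\cu a\circ\lambda_k\colon \ct_i\cu k\to\cu k$, yielding $\Omega(k,a) = (\cu k,\ \cu a\circ\lambda_k)$ over $\cu$ on the nose.

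So the crux is the construction of $\lambda$. Here I would exploit that $\cu$ is a right adjoint, hence preserves limits, and therefore commutes with the pointwise right Kan extension computing $\ct_j$: this gives $\cu\ct_j = \cu\,\ran_j j\cong\ran_j(\cu j) = \ran_j(iu)$, where I used the defining square $\cu j = iu$. On the other hand, restricting $\ct_i\cu$ along $j$ and using $\ran_i i\circ i\cong i$ from \ref{simplify} yields a natural isomorphism $(\ct_i\cu)\circ j = \ran_i i\circ i\circ u\cong iu = \cu j$. Feeding this transformation into the universal property of the right Kan extension $\ran_j(\cu j)$ produces the comparison $\lambda\colon \ct_i\cu\to\ran_j(\cu j)\cong\cu\ct_j$.

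The main obstacle is verifying that this $\lambda$ is genuinely a morphism of monads, i.e. that it intertwines $\eta^{\ct_i},\eta^{\ct_j}$ and $\mu^{\ct_i},\mu^{\ct_j}$. Both units and both multiplications of these codensity monads are themselves the canonical arrows extracted from the universal property of the relevant right Kan extensions, so I expect each coherence square to commute \emph{by uniqueness} of the induced map, the associativity coherence being the delicate one and requiring the composition law \ref{pointwiseran} to identify the iterated extensions. Granting this, $\Omega$ is well defined and $\cu_i\Omega = \cu\cu_j$.

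For the remaining claims: the forgetful functor $\cu_j$ is always faithful and conservative, as is $\cu_i$. Hence if $\cu$ is faithful (resp. conservative) then $\cu_i\Omega = \cu\cu_j$ is faithful (resp. conservative), and since a functor whose composite with $\cu_i$ enjoys one of these properties must enjoy it too (left cancellation, using that $\cu_i$ preserves equalities and isomorphisms), $\Omega$ inherits it. Finally, to see that $\Omega$ preserves limits I would argue that $\cu_i\Omega = \cu\cu_j$ preserves them, since $\cu_j$ creates limits and $\cu$, being a right adjoint, preserves them; as $\cu_i$ creates limits and is conservative, comparing $\Omega(\lim D)$ with $\lim(\Omega D)$ shows the canonical comparison is sent by $\cu_i$ to an isomorphism and is therefore itself an isomorphism, so $\Omega$ preserves limits.
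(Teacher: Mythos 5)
Your proof is correct and produces the same functor, but it is organized differently from the paper's. The paper introduces the auxiliary monad $\ct=\cu\cu_j\cf_j\cf$ on $\Set$, identifies it with the codensity monad $\ran_{\cu j}(\cu j)$ by a chain of Kan-extension manipulations, produces a monad morphism $\ct_i\Rightarrow\ct$ from the canonical map $i\Rightarrow\ran_u(iu)$, and then obtains $\Omega$ as the composite of two comparison functors $\mathsf{Alg}(\ct_j)\to\mathsf{Alg}(\ct)\to\mathsf{Alg}(\ct_i)$, the first coming from terminality of the Eilenberg--Moore adjunction among adjunctions inducing $\ct$, the second from functoriality of $\mathsf{Alg}$ in monad morphisms. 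You instead build a single monad functor $(\cu,\lambda)$ with $\lambda\colon\ct_i\cu\Rightarrow\cu\ct_j$, obtained from $\cu\,\ran_jj\cong\ran_j(\cu j)$ (Prop.~\ref{preserve}) together with the counit isomorphism $\ran_ii\circ i\cong i$ (Prop.~\ref{simplify}), and invoke the induced functor on Eilenberg--Moore categories. The two constructions are essentially mates of one another ($\lambda$ corresponds under $\cf\dashv\cu$ to a monad morphism $\ct_i\Rightarrow\cu\ct_j\cf$), so they yield the same $\Omega$; your version avoids the intermediate category $\mathsf{Alg}(\ct)$ and makes the equality $\cu_i\Omega=\cu\cu_j$ hold by construction, at the price of having to verify the two coherence axioms for $\lambda$ directly. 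You flag this verification without carrying it out; note that the paper is no more detailed on the corresponding point (it does not check that its $\phi$ is a morphism of monads either), and the unit axiom does follow at once from the uniqueness clause of the universal property of $\ran_j(\cu j)$, exactly as you predict, while the multiplication axiom requires the identification of iterated extensions as you indicate. Your arguments for faithfulness, conservativity and limit preservation coincide with the paper's Step 3.
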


\begin{center}
\begin{tikzcd}{}
\text{Alg}(\ct_j) \arrow[ddd, "\cu_j" description] \arrow[rr, "\Omega" description, dashed, description]    &  & \text{CompT}_2 \arrow[ddd, "\cu_i" description]                                     \\
                                                                          &  &                                                                                        \\
                                                                          &  &                                                                                        \\
\ck \arrow[rr, "\cu" description] \arrow[uuu, "\cf_j" description, bend left] &  & \Set \arrow[ll, "\cf" description, bend left] \arrow[uuu, "\cf_i" description, bend right]
\end{tikzcd}
\end{center}

\begin{proof}
The proof goes in three steps. In the first two we construct $\Omega$, in the last one we show that $\Omega$ has all the desired properties.
\begin{itemize}
\item[Step 1] Call $\ct$ the monad $\cu\cu_j\cf_j\cf$. In the first step we assume to be provided with a morphism  of monads $\phi: \ct_i \Rightarrow \ct$, and we construct the dotted functor $\Omega$. This is quite easy to show in fact. If we have such a morphism of monads, we get a comparison functor $\cn: \mathsf{Alg}(\ct) \to \mathsf{Alg}(\ct_i).$ Recall that $\mathsf{Alg}(\ct_i)$ is precisely the category of compact Hausdorff spaces.
\begin{center}
\begin{tikzcd}
\mathsf{Alg}(\ct) \arrow[dddd, "\cu_\ct" description] \arrow[rrr, "\cn" description, dashed]             &  &  & \text{CompT}_2 \arrow[llldddd, "\cu_i" description] \\
                                                                                               &  &  &                                                        \\
                                                                                               &  &  &                                                        \\
                                                                                               &  &  &                                                        \\
\Set \arrow[rrruuuu, "\cf_i" description, bend right] \arrow[uuuu, "\cf_\ct" description, bend left] &  &  &                                                       
\end{tikzcd}
\end{center}

Now, we use the fact that the adjunction $\cf_T \dashv \cu_T$ is terminal among those adjunctions that induce the monad $\ct$, and thus we get another comparison functor $\cc:  \mathsf{Alg}(\ct_j) \to  \mathsf{Alg}(\ct) $ as indicated by the diagram below.

\begin{center}
\begin{tikzcd}
\mathsf{Alg}(\ct_j) \arrow[rrdd, "\cu_j" description] \arrow[rrrrrrr, "\Omega" description, dashed, bend left=15] \arrow[rrrr, "\cc" description, dashed] &  &                                                                              &  & \mathsf{Alg}(\ct) \arrow[dddd, "\cu_T" description] \arrow[rrr, "\cn" description]                                                  &  &  & \text{CompT}_2 \arrow[llldddd, "\cu_i" description] \\
                                                                                                                              &  &                                                                              &  &                                                                                                                             &  &  &                                                        \\
                                                                                                                              &  & \ck \arrow[rrdd, "\cu" description] \arrow[lluu, "\cf_j" description, bend left] &  &                                                                                                                             &  &  &                                                        \\
                                                                                                                              &  &                                                                              &  &                                                                                                                             &  &  &                                                        \\
                                                                                                                              &  &                                                                              &  & \Set \arrow[lluu, "F" description, bend left] \arrow[rrruuuu, "\cf_i" description, bend right] \arrow[uuuu, "\cf_T", bend left] &  &  &                                                       
\end{tikzcd}
\end{center}
The $\Omega$ we are looking for is the composite $\cn \circ \cc$.
\item[Step 2] The second step of the proof is to provide the  $\phi: \ct_i \Rightarrow \ct$ that we used in the previous step. Recall that $\ct$ coincides with $\cu \cu_j\cf_j \cf$ and thus corresponds to $\ran_{\cu\cu_j}(\cu\cu_j)$, now we claim that $\ran_{\cu\cu_j}(\cu\cu_j)$ coincides with $\ran_{\cu \circ j} ( \cu \circ j)$. In fact it is enough to follow the following chain of isomorphisms.

\begin{align*} 
\ran_{\cu\cu_j}(\cu\cu_j) \cong & \cu \circ  \ran_{\cu\cu_j}(\cu_j)  \\ 
\cong & \cu \circ \ran_{\cu}(\ran_{\cu_j}(\cu_j)) \\
\cong & \cu \circ \ran_{\cu}(\ran_j(j)) \\
\cong & \ran_{\cu \circ j} ( \cu \circ j)
\end{align*}

Thus, in order to finish the proof, we just need a morphism of monads $\phi: \ran_ii \Rightarrow \ran_{\cu \circ j} ( \cu \circ j)$. In the notation of the section, recall that $\cu \circ j$ coincides with $i \circ u$, thus we are looking for a map $\phi: \ran_ii \Rightarrow \ran_{i \circ u} ( i \circ u) $. Finally observe that since $\ran_{i \circ u} ( i \circ u)$ coincides with $ \ran_{i}(\ran_u(i \circ u))$ it is enough to provide a natural transformation $i \Rightarrow \ran_u(i \circ u) $, and then use the functoriality of the right Kan extension. Now simply recall that such a map $\phi: i \Rightarrow \ran_u(i \circ u) $ exist by the universal property of the right Kan extension.
\item[Step 3] Now, assume that $\cu$ is conservative or faithful, then $\Omega$ must be so, because it fits in the following diagram 
\begin{center}
\begin{tikzcd}
\mathsf{Alg}(\ct_j) \arrow[dd, "\cu_j" description] \arrow[rr, "\Omega" description, dashed] &      & \text{CompT}_2 \arrow[ldddd, "\cu_i" description, bend left] \\
                                                                                             &      &                                                              \\
\ck \arrow[rdd, "\cu" description, bend right]                                               &      &                                                              \\
                                                                                             &      &                                                              \\
                                                                                             & \Set &                                                             
\end{tikzcd}
\end{center}
where both the legs are faithful or conservative. A similar argument shows that it has to preserve limits using that $\cu_i$ create them, and create when $\cu$ does so.
\end{itemize}

\end{proof}

\begin{exa}

In the following examples finite structures are very relevant and algebras for their codensity monads have not been studied in deep detail.
\begin{enumerate}
\item Finite groups in $\mathsf{Grp}$.
\item $ \mathsf{FinBA}$ in $ \mathsf{CABA}$.
\item  $\mathsf{FinDL}$ in $\mathsf{Poset}$.
\item  $\mathsf{FinPoset}$ in $\mathsf{DL}$.
\item  $ \mathsf{FinJSL}_\bot$ (finite join semilattices with bottom) in $ \mathsf{JSL}_\bot$.
\item  $ \mathsf{FinAb}$ (finite abelian groups) $\mathsf{TorAb}$.
\end{enumerate}
Yet, it is well know that pro-finite groups have a natural structure of compact Hausdorff spaces and this now seems completely natural, in fact any pro-finite group is in a natural way an algebra for the codensity monad of finite groups (this would follow from an elaboration of \ref{restrictedisbell} and \ref{procodensity2}) and by the previous theorem, they inherit a compact Hausdorff structure. The same is true for finite abelian groups in Torsion Groups. The previous theorem shows that all the algebras for the codensity monads of finite strucutres admit such a compact Hausdorff structure and that this happens coherently with the natural compact Hausdorff structure that pro-object have.
\end{exa}

\begin{rem}[Johnstone's perspective]
The previous list of examples meets the content of  \cite{johnstone1986stone}[VI, 2.4]. It is quite tempting to believe in some connection between the results in this section and \cite{johnstone1986stone}[VI, 2.4-9], but the author did not manage to make them formal enough to disclose them. We choose to leave it as an open question to the interested reader.
\end{rem}

\begin{rem}[A comparison between the two notions of finite]
We warned the reader not to confuse \textit{finite} with \textit{finitely presentable}. It is our duty to provide at least one example in which the algebras for the codensity monad of finitely presentable objects are far from having a structure of compact Hausdorff space. That's the case of the double dualization monad in the category of vector spaces over the real line. As shown in \cite{leinster2013codensity}[7.8], in that context the algebras for the codensity monad are called linearly compact spaces. Despite the name, these spaces are not compact in a topological sense (except for very trivial examples), instead one could say that they are compact with respect to their algebraic nature.
\end{rem}

\section{Accessibility}

The main result of this section shows that if $\ck$ is a $\lambda$-presentable category whose $\lambda$-presentable objects are closed under $\lambda$-small limits, then the codensity monad of $\lambda$-presentable objects is very unlikely to be $\lambda$-accessible. Because of  Rem. \ref{sharp} it is more or less impossible to improve this result.

\begin{exa}
There are several examples of this behavior. The ultrafilter monad is very far from being accessible, the same is true for the Vietoris monad, or the Giry monad.
\end{exa}

\begin{defn}
Let $\ff: \ck \to \cb$ be a functor and let $i: \ca \subset \ck$ be a full  subcategory of $\ck$. We say that $\ff$ has \textbf{arity} $\ca$ iff we can recover $\ff$ from its restriction to $\ca$, i.e.
\[\ff \cong \lan_i (\ff \circ i ).\]
\end{defn}

\begin{rem}
If $\ck$ is a $\lambda$-accessible category, then being $\lambda$-accessible for a functor means precisely to have arity $\mathsf{Pres}_{\lambda}(\ck)$. If $\ck$ is a presheaf category, to be cocontinuous means precisely that the arity of $\ff$ is the Yoneda embedding.
\end{rem}

\begin{thm}\label{arity}
If the codensity monad of a full dense subcategory $\ca \subset \ck$ has arity $\ca$ , then it is the identity.
\end{thm}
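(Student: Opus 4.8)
The plan is to reduce the statement to a two-step chain of natural isomorphisms that combines the density hypothesis with the behaviour of right Kan extensions along fully faithful functors. Write $i : \ca \subset \ck$ for the inclusion, so that the codensity monad is $\ct_i = \ran_i i$, and unwind the arity hypothesis: saying that $\ct_i$ has arity $\ca$ means precisely
\[\ran_i i \;\cong\; \lan_i\big((\ran_i i)\circ i\big).\]
So the whole content is to identify the right-hand side.

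First I would simplify the restriction $(\ran_i i)\circ i$. Since $i$ is a full subcategory inclusion it is fully faithful, so \ref{simplify} applies and yields $(\ran_i i)\circ i \cong i$; conceptually, restricting the codensity monad back along $\ca$ recovers $i$ itself. Substituting this into the arity hypothesis collapses the right-hand side to $\lan_i i$, giving $\ran_i i \cong \lan_i i$. Then I would invoke density: by \ref{density} a dense subcategory satisfies $\lan_i i \cong \id_\ck$, so $\ct_i \cong \id_\ck$. Packaged as a single line, the argument is
\[\ct_i = \ran_i i \stackrel{\text{arity}}{\cong} \lan_i\big((\ran_i i)\circ i\big) \stackrel{\ref{simplify}}{\cong} \lan_i i \stackrel{\ref{density}}{\cong} \id_\ck.\]
At the level of underlying endofunctors the theorem is therefore an immediate corollary of the two appendix lemmas, and I would not expect any real difficulty in the formal manipulation itself.

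The one point genuinely requiring care—and the only thing beyond the formal chain—is upgrading this functor-level isomorphism to an isomorphism of monads, i.e. confirming that $\ct_i$ is the identity monad rather than merely an endofunctor abstractly isomorphic to $\id_\ck$. Here I would argue that the unit $\eta : \id_\ck \to \ran_i i$ of the codensity monad is exactly the comparison transformation witnessed by the density of $\ca$, so that the natural isomorphism produced above is compatible with $\eta$; consequently the monad structure transported along it is the trivial one. Equivalently, the codensity unit restricted to $\ca$ is the identity on $i$ (a standard fact about units of codensity monads on their defining subcategory), and combined with density this forces $\eta$ itself to be invertible, so $\ct_i$ is the identity monad. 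The remaining checks—naturality of the intermediate isomorphisms and their compatibility with $\mu$—are routine, and I would record them as such rather than grinding through them.
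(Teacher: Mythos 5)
Your argument is exactly the paper's proof: the same chain $\ct_i \cong \lan_i((\ran_i i)\circ i) \cong \lan_i i \cong \id_\ck$ using \ref{simplify} (in its dual form for the restriction of the right Kan extension) and \ref{density}. Your additional remarks on upgrading the functor isomorphism to a monad isomorphism go slightly beyond what the paper records, but the core approach is identical and correct.
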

\begin{proof}
\[T_i = \lan_i(T_i \circ i) =  \lan_i((\ran_i i) \circ i) \stackrel{\ref{simplify}}{\cong} \lan_i i \stackrel{\ref{density}}{\cong} \mathsf{id}.\]
\end{proof}

\begin{cor}
Let $\ca$ be $\lambda$-complete and $\lambda$-cocomplete. Then the following cannot happen simultaneously:
\begin{enumerate}
\item the codensity monad in the $\mathsf{Ind}_\lambda$-completion is $\lambda$-accessible.
\item the density comonad in the $\mathsf{Pro}_\lambda$-completion preserves $\lambda$-codirected limits.
\end{enumerate} 
\end{cor}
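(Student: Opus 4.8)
The plan is to show that each of the two listed conditions, taken in isolation, forces the relevant (co)monad to be trivial, and then to invoke Corollary~\ref{noidentity} to forbid their simultaneous occurrence. The bridge between the two sides is the remark identifying $\lambda$-accessibility of a functor with the property of having arity $\mathsf{Pres}_\lambda$, combined with Theorem~\ref{arity}.

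First I would dispatch condition (1). Since $\ca$ is $\lambda$-cocomplete, $\mathsf{Ind}_\lambda \ca$ is locally $\lambda$-presentable and its full subcategory of $\lambda$-presentable objects is (equivalent to) $\ca$; here I use that $\lambda$-cocompleteness makes idempotents split, so that $\ca$ is Cauchy complete and the completion adds no new retracts. Moreover $\ca$ is dense in $\mathsf{Ind}_\lambda \ca$. By the remark characterising accessibility, the codensity monad $\ct_i$ is $\lambda$-accessible precisely when it has arity $\ca$, that is $\ct_i \cong \lan_i(\ct_i \circ i)$. Theorem~\ref{arity} then gives $\ct_i \cong \id$, which is exactly the assertion that $i$ is codense.

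Next I would treat condition (2) by dualising. Writing $\mathsf{Pro}_\lambda \ca = (\mathsf{Ind}_\lambda(\ca^\circ))^\circ$, passage to opposite categories carries the density comonad $\lan_{i^\sharp} i^\sharp$ on $\mathsf{Pro}_\lambda \ca$ to the codensity monad $\ran_{(i^\sharp)^\circ}(i^\sharp)^\circ$ of the canonical inclusion $(i^\sharp)^\circ \colon \ca^\circ \hookrightarrow \mathsf{Ind}_\lambda(\ca^\circ)$, and it sends preservation of $\lambda$-codirected limits to preservation of $\lambda$-directed colimits, i.e. to $\lambda$-accessibility. As $\ca^\circ$ is again $\lambda$-complete and $\lambda$-cocomplete, the argument of the previous paragraph applies verbatim to $\ca^\circ$: this codensity monad, being $\lambda$-accessible, is the identity, so $(i^\sharp)^\circ$ is codense, and dualising back $i^\sharp$ is dense.

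Finally, were both (1) and (2) to hold, we would simultaneously have $i$ codense and $i^\sharp$ dense, in direct contradiction with Corollary~\ref{noidentity}; this completes the argument. The step I expect to require the most care is the dualisation: one must verify that $(i^\sharp)^\circ$ is genuinely the canonical $\lambda$-ind-completion inclusion of $\ca^\circ$, and that the comonad--monad correspondence under $(-)^\circ$ is compatible with the limit/colimit-preservation hypotheses, so that the accessibility argument of the first step may be reused unchanged.
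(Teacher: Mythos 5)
Your proposal is correct and follows essentially the same route as the paper, whose proof is simply the instruction to apply Corollary~\ref{noidentity} to Theorem~\ref{arity}: you unfold exactly that, showing condition (1) forces $i$ to be codense via the arity characterisation of $\lambda$-accessibility and Theorem~\ref{arity}, dualising to show condition (2) forces $i^\sharp$ to be dense, and concluding by Corollary~\ref{noidentity}. The extra care you take with the identification $\mathsf{Pres}_\lambda(\mathsf{Ind}_\lambda\ca)\simeq\ca$ (via idempotent-splitting) and with the comonad--monad correspondence under $(-)^\circ$ fills in details the paper leaves implicit, but does not change the argument.
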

\begin{proof}
Apply  Cor. \ref{noidentity} to Thm. \ref{arity}.
\end{proof}

\begin{rem}
The last result shows that it is quite hard for the category of algebras of the codensity monad to be locally presentable, in fact this would force its monad to be accessible for some cardinal. We can't turn this into a theorem because the accessibility rank of the monad could be much higher then the accessibility rank of its category of algebras, on the other hand in practice this is a quite clear warning, one should never expect the category of algebras for the codensity monad to be essentially algebraic (unless we are in the reflective case itself, which trivializes the situation). 
\end{rem}

\section{Generically idempotent monads} 

In the first section we introduced the notion of generically idempotent monad, we showed that an idempotent monad is generically idempotent, but we also warned the reader not to believe that any generically idempotent monad is idempotent. 
So, \textit{how does a generically idempotent monad look like? Is this name well justified?} This final section is devoted to answer these two questions. We will see that in booth cases a central role is played by the full subcategory of fixed points. 

\begin{rem}
In this section we will build on a well known connection between fixed points of a monad and idempotency. Recall that an object $k \in \ck$ is fixed by the monad $(\ct, \eta, \mu$) if $\eta_k$ is an isomorphism. We will say that $\ct$ is idempotent \textit{at} $k$ if $\mu_k$ is an isomorphism, a monad is idempotent if it is idempotent at any object.
\end{rem}

\begin{prop} \label{idempotent/fixed} The following are equivalent. 
\begin{enumerate} 
\item  $\ct$ is idempotent at $k$,
\item $k$ is a fixed point for $\ct$.
\end{enumerate}
\end{prop}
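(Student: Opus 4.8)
The plan is to prove the two implications separately, relying only on the unit laws of the monad evaluated at $k$, namely $\mu_k \circ \eta_{\ct k} = \id_{\ct k}$ (left unit) and $\mu_k \circ \ct(\eta_k) = \id_{\ct k}$ (right unit), together with the functoriality of $\ct$. Both identities hold for every monad and every object, so each already presents $\mu_k$ as a retraction of one of the two canonical maps $\eta_{\ct k}$ and $\ct(\eta_k)$; the whole content of the proposition is to upgrade one of these one-sided inverses to a genuine isomorphism.

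First I would dispatch the implication $(2)\Rightarrow(1)$, which I expect to be the easy half. If $\eta_k$ is invertible then so is $\ct(\eta_k)$, since $\ct$ preserves isomorphisms. The right unit law $\mu_k \circ \ct(\eta_k) = \id_{\ct k}$ then identifies $\mu_k$ with the inverse of $\ct(\eta_k)$, whence $\mu_k$ is invertible and $\ct$ is idempotent at $k$. No use of associativity or of the naturality of $\eta$ is required here.

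For $(1)\Rightarrow(2)$ I would begin symmetrically: from $\mu_k$ invertible, the left unit law gives $\eta_{\ct k} = \mu_k^{-1}$ and the right unit law gives $\ct(\eta_k) = \mu_k^{-1}$, so that $\ct(\eta_k) = \eta_{\ct k}$ and both are isomorphisms. Equivalently, the free algebra $(\ct k,\mu_k)$ has invertible structure map, which already exhibits $\ct k$ as a fixed point. The remaining step — descending from the invertibility of $\ct(\eta_k)$ to that of $\eta_k$ itself — is the crux of the statement.

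This descent is the step I expect to be the main obstacle, and it is where I would concentrate the argument, because $\ct$ does not reflect isomorphisms in general and the monad axioms alone produce no candidate retraction $\ct k \to k$ (such a map is exactly an algebra structure on $k$, which a general object need not carry). A warning example is a non-trivial closure operator on a poset, where every $\mu_k$ is an identity yet $\eta_k$ fails to be invertible away from the closed objects; thus the equivalence cannot rest on the unit and associativity laws by themselves. I would therefore look to supply the missing input from the ambient setting — most naturally by using that $\ct$ is the codensity (generically idempotent) monad under discussion, whose unit is pointwise controlled by the defining right Kan extension — and I would make precise exactly which such property forces $\eta_k$ to be invertible once $\eta_{\ct k}$ is. Isolating and justifying that property is the real work of the proof.
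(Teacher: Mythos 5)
Your proof of $(2)\Rightarrow(1)$ is correct and is the standard argument. For $(1)\Rightarrow(2)$ your suspicion is justified and your counterexample is valid: the implication is false as stated. Any idempotent monad arising from a proper reflective subcategory (the paper's own first example of a generically idempotent monad, e.g.\ abelianisation on groups, or sheafification on presheaves) has $\mu_k$ invertible at \emph{every} $k$, while $\eta_k$ is invertible only at objects of the subcategory; your closure operator is the posetal special case. The paper's proof is only the pointer to \cite{BOR2}[4.2.3], and that reference establishes the \emph{global} equivalences for idempotency ($\mu$ invertible everywhere iff $\ct\eta=\eta\ct$ iff every algebra structure map is invertible iff the forgetful functor is fully faithful); none of these yields the pointwise implication ``$\mu_k$ iso $\Rightarrow\eta_k$ iso'', which is exactly the half that fails. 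So the genuine gap here lies in the statement and in the paper's one-line proof rather than in your reasoning; the one criticism of your proposal is that you leave the repair open and point in a direction that cannot work, since the counterexamples are themselves generically idempotent, so no codensity hypothesis on $\ct$ can rescue the implication.

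You have in fact already located the correct missing datum yourself: an algebra structure on $k$. If $\xi\colon\ct k\to k$ is a $\ct$-algebra structure and $\mu_k$ is invertible, then the two unit laws give $\eta_{\ct k}=\ct(\eta_k)=\mu_k^{-1}$, and naturality of $\eta$ at $\xi$ gives $\eta_k\circ\xi=\ct(\xi)\circ\eta_{\ct k}=\ct(\xi)\circ\ct(\eta_k)=\ct(\xi\circ\eta_k)=\id_{\ct k}$; together with the algebra unit law $\xi\circ\eta_k=\id_k$ this exhibits $\xi$ as a two-sided inverse of $\eta_k$. The correct pointwise statement is therefore: for an object $k$ \emph{underlying a $\ct$-algebra}, $\mu_k$ is invertible iff $\eta_k$ is invertible iff the structure map is invertible. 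This weaker version is all that is needed where the proposition is used later (in Prop.~\ref{many fixed points} the subcategory on which $\ct$ is idempotent is assumed to sit inside $\mathsf{Alg}(\ct)$), but the proposition as stated should be amended.
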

\begin{proof}
Very similar to \cite{BOR2}[4.2.3].
\end{proof}

\begin{rem}
Let $\ct: \ck \to \ck$ be a monad. Its fixed points form a full subcategory $\mathsf{Fix}(\ct)$ of $\ck$, but also a full subcategory of its algebras, because every map between fixed objects is an algebra morphism. We consolidate this observation in the following diagram, where we set the notation for this section.

\begin{center}
\begin{tikzcd}
                                                                                         &  & \mathsf{Alg}(\ct) \arrow[ddd, "\cu_\ct" description] \\
                                                                                         &  &                                                      \\
                                                                                         &  &                                                      \\
\mathsf{Fix}(\ct) \arrow[rr, "i" description, hook] \arrow[rruuu, "j" description, hook] &  & \ck \arrow[uuu, "\cf_{\ct}" description, bend right]
\end{tikzcd}
\end{center}
\end{rem}

\begin{rem}[Generically idempotent monads have fixed points] In order to make the final claim of this section as natural as possible, we observe that the codensity monad of a full subcategory $l: \ca \subset \ck$ has many fixed points, in fact it fixes all the objects of $\ca$:

\[\ct_i \circ i = \ran_ii \circ i \stackrel{\ref{simplify}}{\cong} i. \]
{}
For this reason, when a monad has no fixed points, we can't expected it to be a generically idempotent. Also, observe that a generically idempotent monad is idempotent over its fixed points. A similar remark appears also in \cite{leinster2013codensity}[Sec 5.].
\end{rem}

\subsection{Generically idempotent monads are \textit{generically idempotent}}

 The following proposition justifies the notion of generically idempotent monads.

\begin{prop} \label{many fixed points} 

	If there exists a full subcategory $l:\ca \hookrightarrow \ck$ on which $\ct$ is idempotent, which is codense in the category of algebras, then $\ct$ is generically idempotent.

\begin{center}
\begin{tikzcd}
                                                                          &  &                                                                                &  & \mathsf{Alg}(\ct) \arrow[ddd, "\mathsf{U}_{\ct}"] \\
                                                                          &  &                                                                                &  &                                                       \\
                                                                          &  &                                                                                &  &                                                       \\
\ca \arrow[rrrr, "l" description, bend right] \arrow[rr, "k" description] &  & \mathsf{Fix}(\ct) \arrow[rr, "i" description] \arrow[rruuu, "j" description] &  & \ck                                                  
\end{tikzcd}
\end{center}

\end{prop}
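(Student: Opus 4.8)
The plan is to exhibit $\ct$ as the codensity monad of the full subcategory $l:\ca\hookrightarrow\ck$ itself, i.e.\ to prove $\ct\cong\ran_l l$ as monads. First I would cash in the idempotency hypothesis through Proposition~\ref{idempotent/fixed}: since $\ct$ is idempotent at every object of $\ca$, each such object is a fixed point, so $l$ factors as $\ca\xrightarrow{k}\mathsf{Fix}(\ct)\xrightarrow{i}\ck$ and hence, via $j$, through the algebras. Writing $m:=j\circ k:\ca\hookrightarrow\mathsf{Alg}(\ct)$ for the resulting fully faithful embedding, the diagram yields $\cu_\ct\circ m=i\circ k=l$, while the assumption that $\ca$ is codense in the category of algebras is exactly the statement $\ran_m m\cong\id_{\mathsf{Alg}(\ct)}$.

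The second step is the functor-level computation. Recall from the Semantics--Structure adjunction (noted in the introduction) that every monad is the codensity monad of its own forgetful functor, so $\ct\cong\ran_{\cu_\ct}\cu_\ct$ as monads. Then I would compute
\[
\ran_l l \;=\; \ran_{\cu_\ct\circ m}(\cu_\ct\circ m) \;\stackrel{\ref{compose}}{\cong}\; \ran_{\cu_\ct}\big(\ran_m(\cu_\ct\circ m)\big),
\]
using the dual of the composition law for Kan extensions, and it remains to identify the inner extension with $\cu_\ct$. This is where both hypotheses cooperate: pointwise, $\ran_m(\cu_\ct\circ m)(X)$ is the limit over the comma category $(X\downarrow m)$ of the diagram $\cu_\ct\circ m\circ\pi$; as a right adjoint $\cu_\ct$ is continuous, so it commutes with this limit, and codensity $\ran_m m\cong\id$ reconstructs $X$ inside it. Hence
\[
\ran_m(\cu_\ct\circ m)\;\cong\;\cu_\ct,\qquad\text{so}\qquad \ran_l l\;\cong\;\ran_{\cu_\ct}\cu_\ct\;\cong\;\ct
\]
as endofunctors of $\ck$. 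Conceptually this says that restricting the forgetful functor along the codense subcategory $\ca$ does not disturb its codensity monad, provided the restriction is taken along a continuous functor.

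The main obstacle I expect is not this chain of functor isomorphisms but the upgrade to an isomorphism of \emph{monads}, which is precisely what the definition of generically idempotent requires. The unit and multiplication of both $\ran_l l$ and $\ran_{\cu_\ct}\cu_\ct\cong\ct$ are induced by the universal property of the right Kan extension, so the task is to verify that the isomorphism produced above respects these structure maps --- concretely, that the counit exhibiting $\ran_m m\cong\id$ intertwines the two monad structures through the composition law \ref{compose}. I would handle this by tracking the defining cones through each isomorphism in the chain and checking compatibility with the (co)units, so that the whole chain assembles into a morphism of the induced adjunctions, and therefore of monads. Once this coherence is secured we obtain $\ct\cong\ct_l$ as monads; since $\ca$ is a full subcategory of $\ck$, this is exactly the assertion that $\ct$ is generically idempotent.
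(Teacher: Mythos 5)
Your proof is correct and follows essentially the same route as the paper: both reduce to the chain $\ct\cong\ran_{\cu_\ct}\cu_\ct\cong\ran_{\cu_\ct}\bigl(\ran_{m}(\cu_\ct\circ m)\bigr)\cong\ran_l l$ (with $m=jk$), using that the right adjoint $\cu_\ct$ preserves the right Kan extension $\ran_m m\cong\id$ expressing codensity, together with the composition law for Kan extensions. The paper carries out only the functor-level computation and leaves the compatibility with the monad structures implicit, which you rightly flag as the remaining bookkeeping.
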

\begin{proof}
It is enough to show that $\ct \cong \ran_l l$. In the notation of the previous diagram, we can assume that $\ran_{jk} jk = 1.$

\begin{align*} 
\ct \cong & \ran_{\cu_{\ct}} \cu_\ct   \\ 
\cong & \ran_{\cu_{\ct}} ( \cu_{\ct} \circ 1) \\
\cong & \ran_{\cu_{\ct}} ( \cu_{\ct} \circ \ran_{jk} (jk)) \\
\cong &\ran_{\cu_{\ct}} (  \ran_{jk} (\cu_{\ct} jk)) \\
\cong & \ran_l l.
\end{align*}

\end{proof}

\begin{rem}[The converse is somewhat true]\label{conversetrue}
Assume that $\ck$ is complete. When $\ct$ is generically idempotent $\ct \cong \ct_l$, there exists a full subcategory $l:\ca \hookrightarrow \ck$ on which $\ct$ is idempotent. This is given by the same full subcategory that induces the monad. For quite some time, the author has believed that this subcategory was codense in the category of algebras, but never managed to prove it and still does not have a counterexample. For the moment, the best that we can offer is to show that \textbf{ $\ca$ is limit-dense among $\ct$-algebras}. Let $k$ be a $\ct$-algebra with  structure map $\xi: \ct(k) \to k$. Since the algebra structure is a retraction for the unit, we can express $k$ as the following equalizer in $\ck$, 

\begin{center}
\begin{tikzcd}
\ct(k) \arrow[rr, "1" description, bend right] \arrow[rr, "\eta_k \circ \xi", bend left] &  & \ct(k)
\end{tikzcd}
\end{center}

\[k \cong \mathsf{Eq}(\eta_k \circ \xi, \mathsf{1}_{\ct(k)}).\]

\end{rem}
Now, recall that $\ct(k)$ is isomorphic to the canonical limit of the diagram $k/\ca$, because $\ct$ is generically idempotent. Using that limits commute with limits, we conclude that $k$ is a limit in $\ck$ of diagrams in which only objects in $\ca$ are involved.

\subsection{Towards a classification of generically idempotent monads}

We finish this paper with a conjecture about the shape of a generically idempotent monad. We conjecture that those are precisely those monads that can be recovered from their categories of fixed points. The subsection is devoted to make this conjecture formal.

Let $\ct \cong \ct_l$ be a generically idempotent monad. We can draw the following diagram

\begin{center}
\begin{tikzcd}
                                                                          &  &                                                                                &  & \mathsf{Alg}(\ct_l) \arrow[ddd, "\mathsf{U}_{l}"] \\
                                                                          &  &                                                                                &  &                                                       \\
                                                                          &  &                                                                                &  &                                                       \\
\ca \arrow[rrrr, "l" description, bend right] \arrow[rr, "k" description] &  & \mathsf{Fix}(\ct_l) \arrow[rr, "i" description] \arrow[rruuu, "j" description] &  & \ck                                                  
\end{tikzcd}
\end{center}

Recall that according to Not. \ref{notation} by $\mathsf{U}_{l}$ we actually $\mathsf{U}_{\ct_l}$. By the structure-semantics adjunction, we have $\ct \cong \ran_{\cu_\ct} \cu_\ct$, while by definition of generically idempotent we have $\ct \cong \ran_l l$. Our task now is to provide two arrows as in the diagram below.

\begin{center}
\begin{tikzcd}
         & \ct \arrow[ld, "\cong" description, no head] \arrow[rd, "\cong" description, no head] &                                                                  \\
\ran_l l &                                                                                       & \ran_{\cu_\ct} \cu_\ct \arrow[ld, "\psi" description, dashed, bend left] \\
         & \ran_i i \arrow[lu, "\phi" description, dashed, bend left]                            &                                                                 
\end{tikzcd}\end{center}

\begin{enumerate}
	\item[$\phi$)] Since $l=ik$, a map $\ran_i i \to \ran_l l$ is the same of a map $\ran_i i \to \ran_{ik} ik$. The latter is isomorphic to $\ran_i(\ran_k(ik)).$ Thus it is enough to find a map $i \to \ran_k(ik)$, and $\phi$ will be provided by the functoriality of the right Kan extension. Indeed such a map $i \to \ran_k(ik)$ exists and is given by the universal property of the right Kan extension.
	\item[$\psi$)] We proceed in a very similar way using $i=\cu_\ct j$.
\end{enumerate}

\begin{conj}
A monad is generically idempotent if and only if it is naturally isomorphic to the codensity monad of its fixed points. The isomorphism is given by the couple $\phi, \psi$ constructed in this subsection, which are one inverse to the other.
\end{conj}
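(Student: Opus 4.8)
The biconditional splits into a formal direction and a substantial one. The formal direction is immediate: if $\ct \cong \ran_i i$ for the inclusion $i : \mathsf{Fix}(\ct) \hookrightarrow \ck$, then $\ct$ is by definition the codensity monad of a full subcategory, hence generically idempotent. All the content is in the converse, and the plan is to show that the two canonical comparisons $\phi$ and $\psi$ constructed above are mutually inverse isomorphisms.

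So assume $\ct \cong \ran_l l$ for some full subcategory $l : \ca \hookrightarrow \ck$. As recorded in the remark that generically idempotent monads have fixed points, the codensity monad fixes every object of $\ca$, so $\ca \subseteq \mathsf{Fix}(\ct)$ and $l$ factors as $l = ik$ with $k : \ca \to \mathsf{Fix}(\ct)$. First I would reduce the claim $\ran_i i \cong \ct$ to a codensity statement about the fixed points. Using \ref{compose} and \ref{pointwiseran} we have
\[ \ran_l l = \ran_{ik}(ik) \cong \ran_i\big(\ran_k(ik)\big), \]
and $\phi$ is obtained by applying $\ran_i$ to the canonical transformation $\alpha : i \Rightarrow \ran_k(ik)$ coming from $\id_{ik}$ via the universal property of the right Kan extension. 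Thus $\phi$ is invertible as soon as $\alpha$ is, i.e. as soon as $\ran_k(ik) \cong i$, which says precisely that $\ca$ is codense inside $\mathsf{Fix}(\ct)$: every fixed point is the canonical limit of the $\ca$-objects lying under it. Dually, writing $i = \cu_{\ct} j$ and using that $\cu_{\ct}$ is a right adjoint (hence preserves the limits computing pointwise right Kan extensions), the identity $\ran_i i = \ran_{\cu_{\ct} j}(\cu_{\ct} j) \cong \ran_{\cu_{\ct}}\big(\ran_j(\cu_{\ct} j)\big)$ shows that $\psi$ is invertible as soon as $\ran_j(\cu_{\ct} j) \cong \cu_{\ct}$, i.e. as soon as $j : \mathsf{Fix}(\ct) \hookrightarrow \mathsf{Alg}(\ct)$ is codense. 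These are the two faces of a single obstruction.

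The main obstacle is exactly this codensity, and it is the point at which Remark \ref{conversetrue} stops: there we know only that each algebra, and a fortiori each fixed point, is \emph{some} limit of $\ca$-objects, via its equalizer presentation $x \cong \mathsf{Eq}(\eta_x \circ \xi, 1_{\ct(x)})$ together with $\ct(x) \cong \lim(x/l)$, i.e. that $\ca$ is limit-dense. The hard part is upgrading this limit-density to genuine codensity, replacing ``$x$ is \emph{a} limit of $\ca$-objects'' by ``$x$ is \emph{the} limit of its canonical comma cone.'' Concretely I would try to show that the comparison functor $x/l \to x/i$ (equivalently the comma categories attached to $j$) is initial in the sense needed for limits, connectedness of the relevant comma categories being the crux; the equalizer presentation together with idempotency of $\ct$ at fixed points (\ref{idempotent/fixed}) should supply the morphisms witnessing that connectedness, but making this fully rigorous, or else producing a counterexample, is precisely what the author has been unable to do.

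Finally, granting the codensity step, the remaining verification is bookkeeping. Both $\phi$ and $\psi$ are defined from identity $2$-cells through universal properties, so the composites $\phi\psi$ and $\psi\phi$ are maps of pointwise right Kan extensions whose restrictions to $\ca$, respectively to $\mathsf{Fix}(\ct)$, are identities; by the uniqueness clause in the universal property they must coincide with the canonical isomorphisms $\ran_{\cu_{\ct}}\cu_{\ct} \cong \ct \cong \ran_l l$ and with $1_{\ran_i i}$. This identifies $\phi$ and $\psi$ as inverse to one another and completes the identification $\ct \cong \ran_i i$.
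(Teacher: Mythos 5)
This statement is labelled a \emph{conjecture} in the paper, and the paper offers no proof of it; your proposal does not supply one either, and to your credit you say so explicitly. What you have written is a reduction, not a proof: after factoring $l = ik$ and writing $\ran_l l \cong \ran_i\bigl(\ran_k(ik)\bigr)$ and $\ran_i i \cong \ran_{\cu_\ct}\bigl(\ran_j(\cu_\ct j)\bigr)$, you correctly observe that $\phi$ and $\psi$ are invertible provided the canonical comparisons $i \Rightarrow \ran_k(ik)$ and $\cu_\ct \Rightarrow \ran_j(\cu_\ct j)$ are isomorphisms, i.e.\ provided $\ca$ is codense in $\mathsf{Fix}(\ct)$ and $\mathsf{Fix}(\ct)$ is codense in $\mathsf{Alg}(\ct)$. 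This matches exactly how the paper constructs $\phi$ and $\psi$, and it locates the difficulty in the right place. But that codensity claim is the entire content of the conjecture: Remark \ref{conversetrue} records that the author could only establish \emph{limit-density} (each algebra is \emph{some} limit of $\ca$-objects, via the equalizer presentation and $\ct(x) \cong \lim(x/l)$), not that each algebra is the limit of its \emph{canonical} comma cone. Your proposed route --- initiality of the comparison of comma categories, with connectedness supplied by the equalizer presentation and Proposition \ref{idempotent/fixed} --- is a reasonable plan of attack, but it is not carried out, and no argument is given for why the relevant comma categories should be connected or the comparison initial. So the proposal is conditional on an unproved lemma that is essentially equivalent to the statement being proved.

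Two smaller cautions. First, ``$\phi$ is invertible as soon as $\alpha$ is'' gives only sufficiency; to conclude the ``only if'' direction of the conjecture in the strong form stated (that $\phi,\psi$ \emph{are} the isomorphisms) you would also need to rule out $\phi$ being invertible for some other reason, or else argue directly. Second, the closing ``bookkeeping'' step comparing $\phi\psi$ and $\psi\phi$ with identities needs care, since $\phi$ and $\psi$ are maps between right Kan extensions along \emph{different} functors ($l$, $i$, and $\cu_\ct$), so the uniqueness clause you invoke must be applied after transporting along the isomorphisms $\ran_l l \cong \ct \cong \ran_{\cu_\ct}\cu_\ct$; this is routine but should be written out, and in any case it only becomes meaningful once the codensity gap is closed.
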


\appendix

\section{Useful results about Kan extensions}

This appendix contain some facts about Kan Extension that are needed in the paper. We are including the proofs of some of them, for some others we use references. 

\begin{prop}\label{simplify}
Let $i: \ca \subset \ck$ be a full subcategory of $\ck$. Then, for all functors $\ff: \ca \to \cb$ for which the Kan extension $\lan_i \ff$ exists, one has \[(\lan_i \ff ) \circ i \cong \ff.\]
\end{prop}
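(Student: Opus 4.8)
The statement is the familiar fact that restricting a left Kan extension along a fully faithful functor recovers the original functor. By construction $\lan_i \ff$ comes equipped with a universal unit $\eta\colon \ff \Rightarrow (\lan_i \ff)\circ i$, and the whole content of the proposition is that this $\eta$ is invertible; full faithfulness of $i$ is precisely the hypothesis that makes this so. The plan is therefore to prove that each component $\eta_a\colon \ff a \to (\lan_i \ff)(ia)$ is an isomorphism, which I would verify pointwise.

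First I would invoke the pointwise description of the Kan extension, which is the form in which it arises everywhere in this paper, writing it as the copower-weighted coend
\[ (\lan_i \ff)(x) \;\cong\; \int^{a\in\ca} \ck(ia,x)\cdot \ff a \]
and then evaluating at $x = ib$ for $b \in \ca$. The decisive step is to feed in the full faithfulness of $i$, which supplies a natural isomorphism $\ck(ia,ib)\cong \ca(a,b)$; substituting it rewrites the coend as $\int^{a}\ca(a,b)\cdot \ff a$. At this point the co-Yoneda lemma (the density formula for $\ca$) identifies $\int^{a}\ca(a,b)\cdot \ff a$ with $\ff b$, yielding $(\lan_i \ff)(ib)\cong \ff b$ and hence $(\lan_i \ff)\circ i \cong \ff$.

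The one point that requires care, and which I regard as the main (if minor) obstacle, is to check that the composite isomorphism produced in this way is exactly the unit $\eta_b$, and not merely some abstract isomorphism: this matters because $\eta$ is the canonical comparison and the naturality in $b$ of the resulting identification must be inherited from it. I would settle this by recalling that the co-Yoneda isomorphism is the one classifying the identity, so the comparison it induces is the universal one, making the whole chain compatible with $\eta$. Finally I would remark that, purely formally and without the pointwise hypothesis, the same conclusion can be drawn directly from the universal property of $\lan_i \ff$ together with the full faithfulness of $i$, but the pointwise computation above is cleaner and covers every application of this proposition in the paper.
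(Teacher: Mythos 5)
Your main computation is correct and is essentially the textbook argument: the paper gives no proof of its own here, only a citation to Borceux (3.7.3), and what you have written out — pointwise coend formula, full faithfulness turning $\ck(ia,ib)$ into $\ca(a,b)$, then co-Yoneda — is precisely the proof behind that citation. Your care about identifying the resulting isomorphism with the unit $\eta$ is also well placed and handled correctly.

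One claim in your last sentence is, however, false and worth correcting: the conclusion does \emph{not} follow ``purely formally'' from the universal property of $\lan_i \ff$ together with full faithfulness of $i$. A left Kan extension along a fully faithful functor that exists but is not pointwise need not have invertible unit; this is a known subtlety (it is the reason Borceux's 3.7.3, and Kelly's enriched analogue, state the result for \emph{pointwise} Kan extensions only). So the pointwise hypothesis you invoke at the start is not a convenience you could later discard — it is doing essential work, and strictly speaking the proposition as stated (assuming only existence of $\lan_i \ff$) is only established in the pointwise case. This is harmless for the paper, since every Kan extension used there is computed by (co)limits in a (co)complete target and is therefore pointwise, but your proof should either add the pointwise hypothesis explicitly or drop the claim that it is dispensable.
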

\begin{proof}
\cite{borceux_1994}[3.7.3]
\end{proof}

\begin{prop}\label{compose}
$\lan_{\ff \circ \gggg} (\_) \cong \lan_{\ff}( \lan_{\gggg} (\_)).$
\end{prop}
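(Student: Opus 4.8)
The plan is to deduce the isomorphism from the uniqueness of adjoints, exploiting the fact that a left Kan extension along a functor is nothing but the left adjoint to precomposition along that functor. Fix the data $\gggg: \ca \to \cb$ and $\ff: \cb \to \cc$, so that $\ff \circ \gggg : \ca \to \cc$, and read both sides of the claim as functors $[\ca,\cd] \to [\cc,\cd]$ sending a functor to its left Kan extension. Writing $\gggg^{*}(G) = G \circ \gggg$ for precomposition, the defining universal properties give the adjunctions $\lan_\gggg \dashv \gggg^{*}$, $\lan_\ff \dashv \ff^{*}$, and $\lan_{\ff \circ \gggg} \dashv (\ff \circ \gggg)^{*}$.

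The one computation to record is that precomposition is contravariantly functorial, so that composing along $\ff \circ \gggg$ factors as composing along $\ff$ and then along $\gggg$:
\[ (\ff \circ \gggg)^{*}(H) = H \circ \ff \circ \gggg = \gggg^{*}(\ff^{*}(H)) = (\gggg^{*} \circ \ff^{*})(H). \]
Hence $(\ff \circ \gggg)^{*} = \gggg^{*} \circ \ff^{*}$. Now I would assemble the adjoints: on one hand $\lan_{\ff \circ \gggg}$ is by definition left adjoint to $\gggg^{*} \circ \ff^{*}$; on the other hand $\lan_\ff \circ \lan_\gggg$, being a composite of left adjoints, is left adjoint to the composite of the right adjoints taken in the reverse order, namely again $\gggg^{*} \circ \ff^{*}$. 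Since two left adjoints to the same functor are naturally isomorphic, this yields $\lan_{\ff \circ \gggg}(\_) \cong \lan_\ff(\lan_\gggg(\_))$, and the isomorphism is natural.

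The hard part will not be the algebra of adjoints, which is entirely formal, but the existence bookkeeping. The clean argument above presupposes that the Kan extensions exist as honest functors on the whole functor categories, i.e.\ that the three adjunctions exist globally; under the running hypotheses of the paper (extension along inclusions of small subcategories into (co)complete categories) this is automatic. If instead one only wants the statement at a single functor $F: \ca \to \cd$, I would bypass the global adjunction and verify directly that $\lan_\ff(\lan_\gggg F)$ satisfies the universal property of $\lan_{\ff \circ \gggg} F$: any natural transformation out of $\ff \circ \gggg$ factors uniquely first through $\lan_\gggg F$ along $\gggg$ and then through $\lan_\ff(\lan_\gggg F)$ along $\ff$, composing the two universal factorizations. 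Alternatively, one can simply cite the standard reference, e.g.\ \cite{borceux_1994}, as is done for the companion Proposition~\ref{simplify}.
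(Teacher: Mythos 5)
Your argument is exactly the paper's: identify each left Kan extension as the left adjoint to precomposition, observe that $(\ff\circ\gggg)^{*}=\gggg^{*}\circ\ff^{*}$, and conclude by the fact that left adjoints compose and are unique up to natural isomorphism. The extra remarks on existence bookkeeping are sensible but do not change the route.
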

\begin{proof}
$\lan_{\ff \circ \gggg}$ is the left adjoint to the precomposition $\_ \circ \ff \circ \gggg$. The latter right adjoint can be written as the composition of $\_ \circ \ff$ with $\_ \circ \gggg$. Now it is enough to observe that left adjoints compose.
\end{proof}

\begin{prop}\label{preserve}
Left adjoints preserve left Kan extensions, right adjoint preserve right Kan extensions.
\end{prop}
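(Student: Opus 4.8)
The plan is to prove this purely formally, from the universal property of the Kan extension, rather than from any pointwise colimit formula; this keeps the argument in the spirit of the paper and, pleasantly, does not require the Kan extension to be pointwise. I would spell out the left-adjoint/left-Kan-extension half and obtain the right-adjoint/right-Kan-extension half by passing to opposite categories. Concretely, fix an adjunction $L \dashv R$ with $L : \cb \to \cd$ and $R : \cd \to \cb$, together with functors $\gggg : \ca \to \cb$ and $\ff : \ca \to \cc$ such that $\lan_\ff \gggg$ exists; the goal is $L \circ \lan_\ff \gggg \cong \lan_\ff (L \circ \gggg)$. Two elementary tools feed the argument. First, the universal property of the single left Kan extension $\lan_\ff \gggg$ supplies, for every $k : \cc \to \cb$, a bijection between natural transformations $\lan_\ff \gggg \Rightarrow k$ and natural transformations $\gggg \Rightarrow k \circ \ff$, natural in $k$. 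Second, post-composing with the adjoint pair gives $L \circ (-) \dashv R \circ (-)$ between the functor categories over $\cc$ (and likewise over $\ca$), so that $L \circ p \Rightarrow q$ corresponds naturally to $p \Rightarrow R \circ q$.

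The core step is to assemble these into a single chain. For an arbitrary $h : \cc \to \cd$ I compute
\begin{align*}
\mathrm{Nat}(L \circ \lan_\ff \gggg,\ h)
&\cong \mathrm{Nat}(\lan_\ff \gggg,\ R \circ h) \\
&\cong \mathrm{Nat}(\gggg,\ R \circ h \circ \ff) \\
&\cong \mathrm{Nat}(L \circ \gggg,\ h \circ \ff),
\end{align*}
invoking the post-composition adjunction over $\cc$, then the universal property of $\lan_\ff \gggg$ (with $k = R \circ h$), then the post-composition adjunction over $\ca$. Every bijection is natural in $h$, so the composite exhibits $L \circ \lan_\ff \gggg$ as satisfying the defining universal property of $\lan_\ff (L \circ \gggg)$; by the Yoneda lemma (uniqueness of representing objects) this forces $L \circ \lan_\ff \gggg \cong \lan_\ff (L \circ \gggg)$, and in particular the latter exists. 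The second half of the statement then costs nothing: applying the same argument in the opposite categories to the opposite adjunction $R^{\opp} \dashv L^{\opp}$ (so that $R^{\opp}$ is a left adjoint) and using $\ran_\ff \cong (\lan_{\ff^{\opp}})^{\opp}$ yields that $R$ preserves right Kan extensions.

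I do not expect a genuine obstacle here; the content is bookkeeping, and the only point demanding care is discipline about existence. I must invoke the universal property only of the single Kan extension $\lan_\ff \gggg$ that is assumed to exist, and verify that each bijection in the chain is natural in $h$ so that Yoneda legitimately applies — I should not quietly assume that all left Kan extensions along $\ff$ exist, nor replace the formal argument by the functor-category adjunction $\lan_\ff \dashv \ff^{*}$ unless that adjunction is available globally. For completeness I would remark that when $\lan_\ff \gggg$ is pointwise there is an even shorter route, since $L$, being a left adjoint, preserves the defining colimits; but I prefer the hom-set argument above, as it is both more general and more faithful to the formal viewpoint of the paper.
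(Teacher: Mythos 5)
Your proof is correct: the chain of natural bijections $\mathrm{Nat}(L\circ\lan_\ff\gggg,h)\cong\mathrm{Nat}(L\circ\gggg,h\circ\ff)$ exhibits exactly the universal property of $\lan_\ff(L\circ\gggg)$, and the dualization for right adjoints is routine. The paper offers no proof of its own here, only a citation to Borceux 3.7.4, and your hom-set argument is essentially the standard one given there, so there is nothing further to compare.
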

\begin{proof}
\cite{borceux_1994}[3.7.4]
\end{proof}

\begin{prop}\label{density}
The following are equivalent:
\begin{enumerate}
	\item $i: \ca \to \cb$ is dense;
	\item  $\lan_ii$ exists, is pointwise and coincides with $\mathsf{id}_\cb$ up to natural isomorphism.
\end{enumerate}
\end{prop}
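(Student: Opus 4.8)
The plan is to route both conditions through the pointwise colimit formula for the left Kan extension and observe that they unwind to the same statement. Recall that a pointwise left Kan extension is computed objectwise by a colimit over the comma category: when it exists,
\[ (\lan_i i)(b) \;\cong\; \colim\Big( i/b \xrightarrow{\;\mathsf{pr}\;} \ca \xrightarrow{\;i\;} \cb \Big), \]
the colimit of the canonical diagram whose objects are the arrows $ia \to b$. The tautological cocone with legs $ia \to b$ induces, by the universal property of the colimit, a canonical comparison natural transformation $\alpha \colon \lan_i i \Rightarrow \id_\cb$, with $\alpha_b$ the unique map out of the colimit restricting to each $ia \to b$.

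First I would record the colimit formulation of density: $i$ is dense exactly when, for every $b \in \cb$, the tautological cocone exhibits $b$ as $\colim(i/b \to \ca \to \cb)$, equivalently when $\alpha_b$ is invertible for every $b$. (If one prefers the nerve-theoretic definition, namely that $b \mapsto \cb(i-, b)$ is fully faithful, I would first pass to this colimit form via the realization--nerve adjunction, which is standard.) With this in hand the equivalence becomes a direct translation: clause (2) asserts precisely that $\lan_i i$ exists pointwise, hence is given by the displayed colimit, and that its canonical comparison to $\id_\cb$ is an isomorphism, which is verbatim the colimit formulation of density.

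Both implications then reduce to reading this identification in the two directions: density supplies, for each $b$, a colimiting cocone, hence the pointwise Kan extension exists and its canonical map $\alpha$ is invertible, giving (2); conversely the pointwise hypothesis in (2) delivers the colimit formula, and the invertibility of $\alpha$ recovers density, giving (1). I expect the only genuinely delicate point to be the bookkeeping around the \emph{canonical} comparison: the phrase ``coincides with $\id_\cb$ up to natural isomorphism'' must be read as invertibility of $\alpha$, not merely an abstract isomorphism $\lan_i i \cong \id_\cb$. This is pinned down by the triangle identity $\alpha i \circ \eta = \id_i$ relating $\alpha$ to the universal unit $\eta \colon i \Rightarrow (\lan_i i)\circ i$. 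The remainder is routine, and one may alternatively cite \cite{borceux_1994}.
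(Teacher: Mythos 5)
The paper offers no proof of this proposition at all: it is the one item in the appendix given neither an argument nor a citation, and is evidently being treated as a standard fact (it is Mac Lane's characterization of density via the pointwise formula for $\lan_i i$). Your argument is that standard proof and it is correct: density as the requirement that each $b$ be the colimit of $i/b \to \ca \to \cb$ under the tautological cocone is verbatim the statement that the pointwise Kan extension exists at $b$ and that the canonical comparison $\alpha_b$ is invertible. One comment on the point you flag as delicate. You resolve the ambiguity in ``coincides with $\id_\cb$ up to natural isomorphism'' by decreeing that it means invertibility of the canonical $\alpha$ rather than the existence of some abstract isomorphism. That is surely the intended reading, but it is worth recording that the two readings agree, so your proof in fact covers the literal statement: $\lan_i i$ carries the density comonad structure with counit $\alpha$, and if $\lan_i i \cong \id_\cb$ by \emph{any} natural isomorphism, transporting the comonad structure across it exhibits the transported counit as a split epimorphism in the monoid of natural endomorphisms of $\id_\cb$. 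That monoid is commutative (naturality of $\sigma$ at the component $\tau_b$ gives $\sigma_b\tau_b=\tau_b\sigma_b$), and a one-sided invertible element of a commutative monoid is invertible; hence $\alpha$ is an isomorphism whenever any isomorphism $\lan_i i \cong \id_\cb$ exists. With that observation there is no gap, and the argument is a legitimate stand-in for the proof the paper omits.
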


\begin{prop} \label{left adjoint exist}
Let $R: \ca \to \cb$. The following are equivalent.

\begin{enumerate}
	\item $R$ is a right adjoint.
	\item The right Kan extension $\ran_R(1_\ca)$ exists and $R$ preserves it.
\end{enumerate}

Moreover, in that case $\ran_R(1_\ca)$ is the desired left adjoint. The dual statement involving left Kan extensions is true for left adjoints.
\end{prop}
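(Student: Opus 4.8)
The plan is to prove the two implications separately, with the candidate left adjoint being $L := \ran_R(1_\ca)$ equipped with its universal natural transformation $\epsilon : LR \to 1_\ca$, which will serve as the counit of the adjunction $L \dashv R$. The real content is to manufacture a compatible unit $\eta : 1_\cb \to RL$ and to verify the two triangle identities; the hypothesis in (2) that $R$ preserves the Kan extension is precisely what is needed to produce $\eta$.

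First I would treat the substantial direction, (2) $\Rightarrow$ (1). By assumption $(RL, R\epsilon)$ is the right Kan extension $\ran_R R = \ran_R(R \circ 1_\ca)$. Feeding the identity transformation $1_R : R \to R$ into the universal property of this Kan extension yields a unique $\eta : 1_\cb \to RL$ characterised by
$$R\epsilon \cdot \eta R = 1_R,$$
which is already one of the two triangle identities. For the remaining identity $\epsilon L \cdot L\eta = 1_L$, I would invoke the universal property of $(L, \epsilon) = \ran_R(1_\ca)$ itself: natural transformations $G \to L$ correspond bijectively to transformations $GR \to 1_\ca$ via $\beta \mapsto \epsilon \cdot \beta R$, so two transformations $L \to L$ coincide as soon as they agree after whiskering by $R$ and post-composing with $\epsilon$. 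Computing $\epsilon \cdot (\epsilon L \cdot L\eta)R$ and using the interchange law to rewrite $\epsilon \cdot \epsilon LR = \epsilon \cdot LR\epsilon$, this composite collapses, via the first triangle identity, to $\epsilon$ --- the same value obtained from $1_L$ --- so the second triangle identity follows and $L \dashv R$.

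For (1) $\Rightarrow$ (2), I would start from an adjunction $L \dashv R$ with unit $\eta$ and counit $\epsilon$ and show directly that $(L, \epsilon)$ exhibits $\ran_R(1_\ca)$: given any $G : \cb \to \ca$ and $\alpha : GR \to 1_\ca$, the transformation $\widehat{\alpha} := \alpha L \cdot G\eta : G \to L$ satisfies $\epsilon \cdot \widehat{\alpha} R = \alpha$ (again by interchange and a triangle identity) and is the unique such factorization. Crucially, this construction survives whiskering by an arbitrary functor $F : \ca \to \cc$, so the Kan extension is absolute; in particular $R$ preserves it, and the \emph{moreover} clause is immediate since $L = \ran_R(1_\ca)$ by construction. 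The dual statement for left adjoints follows by reversing all $2$-cells (replacing right Kan extensions by left ones), so no separate argument is needed.

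The main obstacle is bookkeeping rather than conceptual: keeping the whiskerings straight in the second triangle identity and applying the interchange law in the correct direction. The one genuine subtlety to flag is that the preservation hypothesis in (2) is used exactly once but essentially --- it is what upgrades $\ran_R(1_\ca)$ from a bare Kan extension into something whose image $RL$ carries the universal property of $\ran_R R$, and that universal property is the sole source of the unit $\eta$.
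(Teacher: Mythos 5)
Your proof is correct. The paper itself gives no argument here, simply citing Borceux's Handbook (3.7.6); your construction of the unit from the preservation hypothesis, the verification of the triangle identities via the universal property of $\ran_R(1_\ca)$, and the observation that for an adjunction the Kan extension is absolute is precisely the standard argument that the cited reference contains.
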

\begin{proof}
\cite{borceux_1994}[3.7.6].
\end{proof}

\begin{prop}\label{lan_ylan_f}
Given a span where $\cb$ is a cocomplete category and $\ca$ is small, there is a andjunction
\begin{center}
\begin{tikzcd}
\ca \arrow[ddd, "\yy"'] \arrow[rrr, "\ff"]                                       &  &  & \cb \arrow[lllddd, "{\lan_\ff \yy}" description, dashed, bend left=15] \\
                                                                             &  &  &                                                             \\
                                                                             &  &  &                                                             \\
\Set^{\ca^\circ} \arrow[rrruuu, "\lan_{\yy} \ff" description, dashed, bend left=15] &  &  &                                                            
\end{tikzcd}
\end{center}
$\lan_{\yy} \ff \dashv \lan_{\ff} \yy$ where the right adjoint coincides with the functor $\cb(\ff \_, \_)$.
\end{prop}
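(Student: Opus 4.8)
The plan is to prove the statement in two movements: first to identify the candidate right adjoint $\lan_\ff \yy$ concretely with the hom-functor $\cb(\ff\_,\_)$, and then to verify the hom-set bijection $\cb(\lan_\yy\ff(P),b)\cong \Set^{\ca^\circ}(P,\cb(\ff\_,b))$ by a short coend manipulation. Both movements rest only on the (co)Yoneda lemma, the universal property of copowers, and the fact that $\cb(\_,b)$ carries colimits to limits; nothing deeper is needed.

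First I would pin down the right-hand functor. Since $\Set^{\ca^\circ}$ is cocomplete and $\ca$ is small, the pointwise left Kan extension $\lan_\ff\yy$ exists and is given by the coend $\lan_\ff\yy(b)\cong \int^{a\in\ca}\cb(\ff a,b)\cdot \yy a$, where $\cdot$ denotes the copower (set-indexed coproduct) in $\Set^{\ca^\circ}$. Because evaluation at an object $a'\in\ca$ is cocontinuous and computes copowers pointwise, this presheaf sends $a'$ to $\int^{a}\cb(\ff a,b)\times\ca(a',a)$. The integrand is contravariant in $a$ through $\cb(\ff a,b)$, so the co-Yoneda (density) formula applies to the presheaf $a\mapsto \cb(\ff a,b)$ and collapses the coend to $\cb(\ff a',b)$. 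Hence $\lan_\ff\yy\cong \cb(\ff\_,\_)$, exactly the functor named in the statement.

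Next I would compute the geometric realization and assemble the adjunction. As $\cb$ is cocomplete, $\lan_\yy\ff$ exists pointwise, and the Yoneda lemma gives the standard formula $\lan_\yy\ff(P)\cong\int^{a}P(a)\cdot \ff a$ (a colimit indexed by the small category of elements of $P$, so cocompleteness of $\cb$ genuinely suffices). The adjunction then follows from the chain
\begin{align*}
\cb(\lan_\yy\ff(P),\,b)
&\cong \cb\Big(\textstyle\int^{a}P(a)\cdot \ff a,\ b\Big)\\
&\cong \textstyle\int_{a}\cb(P(a)\cdot \ff a,\ b)\\
&\cong \textstyle\int_{a}\Set\big(P(a),\ \cb(\ff a,b)\big)\\
&\cong \Set^{\ca^\circ}\big(P,\ \cb(\ff\_,b)\big),
\end{align*}
where the first step is the realization formula, the second is continuity of $\cb(\_,b)$ turning the coend into an end, the third is the defining adjunction of the copower, and the last is the end description of natural transformations. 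Together with the identification of Step~1 this gives $\lan_\yy\ff\dashv\lan_\ff\yy$ with right adjoint $\cb(\ff\_,\_)$, and naturality in $b$ is inherited from the naturality of each isomorphism in the chain.

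I expect the main obstacle to be purely presentational rather than conceptual: the one place demanding genuine care is the bookkeeping of variances in the coends, so that co-Yoneda is invoked in its correct presheaf form for the contravariant integrand $\cb(\ff a,b)$, together with the mild size check that the realization colimit is small. Everything else is a routine application of the universal properties already in play, so I would state the copower and Fubini/continuity steps at the right level of generality and otherwise keep the argument as the displayed line of isomorphisms.
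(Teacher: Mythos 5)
Your proof is correct, but it takes a genuinely different route from the paper's. You work pointwise: you compute both Kan extensions by their coend formulas, identify $\lan_\ff\yy\cong\cb(\ff\_,\_)$ via co-Yoneda applied to the contravariant integrand, and then establish the hom-set bijection by the standard chain (continuity of $\cb(\_,b)$, the copower adjunction, the end formula for natural transformations). The paper instead stays entirely at the formal level: it first verifies the adjunction isomorphism $\cb(\lan_\yy\ff(\_),\_)\cong\Set^{\ca^\circ}(\_,\cb(\ff\_,\_))$ only on representables, invoking $\lan_\yy\ff\circ\yy\cong\ff$ (Prop.~\ref{simplify}) and the Yoneda lemma, and arguing that agreement on representables suffices; it then identifies the right adjoint with $\lan_\ff\yy$ not by computing it but by the characterization of adjoints via Kan extensions (Prop.~\ref{left adjoint exist}), through the one-line calculation $\lan_{\lan_\yy\ff}(1)\cong\lan_{\lan_\yy\ff}(\lan_\yy\yy)\cong\lan_\ff\yy$. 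Your approach buys explicitness --- one sees exactly what both functors do on objects, and the size and existence hypotheses are visibly discharged by the smallness of the indexing coends --- at the cost of the variance bookkeeping you yourself flag. The paper's approach buys brevity and fits its stated programme of pushing the formal perspective as far as possible, but it leans on the uniqueness-from-representables argument (really the fact that $\lan_\yy\ff$ is determined as a left adjoint, hence cocontinuous, hence determined by its restriction along the dense $\yy$), which is left somewhat implicit. Either argument is complete; yours could be shortened by noting that your Step~1 coend identity is exactly the pointwise formula for $\lan_\ff\yy$ evaluated via density, but as written it is sound.
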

\begin{proof}
It is quite easy to show that $\cb(\ff \_, \_)$ is the right adjoint of $\lan_{\yy} \ff $. In fact, this essentially follows by the Yoneda Lemma. We would like to show that there exists a natural isomorphism: \[\cb(\lan_yf(\_), \_ ) \cong \Set^{\ca^\circ}(\_, \cb(\ff \_, \_) ).\] This exists if and only if there exists one when we compose with the Yoneda embedding (this is essentially the content of the Yoneda lemma). Thus we can evaluate the previous line on the representables.

\[
\mathsf{B}(\lan_yf(y(\_)), \_ ) \stackrel{\ref{simplify}}{\cong}  \cb(f(\_), \_ ) \stackrel{\text{Yo}}{\cong} \Set^{\ca^\circ}(y(\_), \cb(\ff \_, \_) ).
\]
Now, we have to show that $\lan_\ff \yy$ is isomorphic to $\cb(\ff \_, \_)$. In order to do so, we use \ref{left adjoint exist} and prove that $\lan_{\lan_{\yy} \ff}(1) \cong \lan_\ff \yy$. This is a straightforward computation.

\[\lan_{\lan_{\yy} \ff}(1) \cong \lan_{\lan_{\yy} \ff}(\lan_\yy \yy) \cong \lan_{\ff} \yy.\]
\end{proof}

\begin{prop}   \label{pointwiselan}
In the diagram below, assume that $\ca$ is small and $\cc$ is cocomplete. Then the Kan extension $\lan_gf$ is isomorphic to the composition $\lan_{\yy_\ca}f \circ \lan_{g}\yy_\ca$.
\begin{center}
\begin{tikzcd}
\ca \arrow[rr, "f" description] \arrow[rdd, "g" description]   \arrow[rdddd, "\yy_\ca" description, bend right]&                                                                                        & \cc \\
                                                           &                                                                                        &   \\
                                                           & \cb \arrow[dd, "\lan_gy" description, dashed] \arrow[ruu, "\lan_gf" description, dashed] &   \\
                                                           &                                                                                        &   \\
                                                           & \Set^{\ca^\circ} \arrow[ruuuu, "\lan_yf" description, dashed, bend right]                &  
\end{tikzcd}
\end{center}
\end{prop}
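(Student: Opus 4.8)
The plan is to recognize the right-hand composite as a single left Kan extension in disguise and then collapse it using the preservation properties already established. First I would observe that, because $\cc$ is cocomplete, the functor $\lan_{\yy_\ca} f \colon \Set^{\ca^\circ} \to \cc$ is the geometric realization associated with $f$, and by \ref{lan_ylan_f} it is a genuine left adjoint, its right adjoint being $\cc(f\_,\_) \cong \lan_f \yy_\ca$. This is the fact on which the whole argument turns, since left adjoints are precisely the functors that commute with the formation of left Kan extensions.

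Next I would note that the pointwise left Kan extension $\lan_g \yy_\ca \colon \cb \to \Set^{\ca^\circ}$ genuinely exists: its values are computed by colimits indexed by comma categories over the small category $\ca$, and $\Set^{\ca^\circ}$ is cocomplete, so these colimits are available. With both ingredients in place the computation is a two-line chain of isomorphisms. Applying \ref{preserve} to the left adjoint $\lan_{\yy_\ca} f$ and the left Kan extension $\lan_g \yy_\ca$ slides the realization functor through the Kan extension,
\[
\lan_{\yy_\ca} f \circ \lan_g \yy_\ca \stackrel{\ref{preserve}}{\cong} \lan_g\!\left( \lan_{\yy_\ca} f \circ \yy_\ca \right).
\]
Since the Yoneda embedding is fully faithful we may regard $\ca$ as a full subcategory of $\Set^{\ca^\circ}$ and invoke \ref{simplify}, which gives $\lan_{\yy_\ca} f \circ \yy_\ca \cong f$; substituting this yields $\lan_g f$ and finishes the argument.

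The main obstacle is purely one of hygiene rather than of ideas: one must make sure the hypotheses of \ref{preserve} are actually met, i.e. that $\lan_{\yy_\ca} f$ really is a left adjoint (which is exactly why cocompleteness of $\cc$ is assumed) and that $\lan_g \yy_\ca$ is a pointwise left Kan extension, so that the preservation statement applies verbatim. Once those two existence-and-adjointness points are secured there is no remaining content: smallness of $\ca$ is used only to guarantee the colimits defining $\lan_g \yy_\ca$, cocompleteness of $\cc$ only to guarantee the adjoint, and the resulting isomorphism is natural because each of the two steps is.
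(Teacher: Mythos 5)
Your proof is correct and is essentially the paper's own argument: identify $\lan_{\yy_\ca}f$ as the left adjoint of the nerve, use that left adjoints preserve left Kan extensions to rewrite the composite as $\lan_g(\lan_{\yy_\ca}f\circ\yy_\ca)$, and collapse the inner term to $f$. The extra care you take about the existence of $\lan_g\yy_\ca$ and the full faithfulness of Yoneda in invoking \ref{simplify} is welcome hygiene but not a different route.
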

\begin{proof}
By the universal property of the presheaf construction, $\lan_{\yy_A}f$ is the left adjoint to the nerve of $f$. Being a left adjoint, it preserves all left Kan extensions, thus \[\lan_{\yy_\ca}f \circ \lan_{g}\yy_\ca \cong  \lan_{g}(\lan_{\yy_\ca}f \circ \yy_A) \cong  \lan_{g}f.  \]
\end{proof}

\begin{prop}  \label{pointwiseran}
Dually, one has that  $\ran_\gggg \ff$ is isomorphic to the composition $\ran_{\yy^\sharp_A}\ff \circ \ran_{\gggg}\yy^\sharp_A$.
\end{prop}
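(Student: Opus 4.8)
The plan is to dualize the proof of Proposition~\ref{pointwiselan} essentially word for word. The dual setup takes $A$ small, $\cc$ \emph{complete}, together with functors $\ff\colon A \to \cc$ and $\gggg\colon A \to \cb$, and replaces the presheaf construction $\yy_A$ with the copresheaf construction $\yy^\sharp_A\colon A \to (\Set^A)^\circ$. The composite in question, $\ran_{\yy^\sharp_A}\ff \circ \ran_{\gggg}\yy^\sharp_A$, then runs $\cb \to (\Set^A)^\circ \to \cc$, exactly mirroring the covariant diagram, and completeness of $\cc$ (together with the completeness of $(\Set^A)^\circ$ noted earlier) guarantees that every right Kan extension in sight exists.

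First I would record the crucial structural input: by the conerve construction, the functor $\ran_{\yy^\sharp_A}\ff$ is a \emph{right} adjoint, since the conerve adjunction reads $\ran_{\ff}\yy^\sharp \dashv \ran_{\yy^\sharp}\ff$. This is the precise dual of the fact that the geometric realization $\lan_{\yy}\ff$ is a left adjoint, which was the engine of the proof of \ref{pointwiselan}. Invoking \ref{preserve}, right adjoints preserve right Kan extensions, so I may pull $\ran_{\yy^\sharp_A}\ff$ through the right Kan extension along $\gggg$:
\[
\ran_{\yy^\sharp_A}\ff \circ \ran_{\gggg}\yy^\sharp_A \;\cong\; \ran_{\gggg}\!\left(\ran_{\yy^\sharp_A}\ff \circ \yy^\sharp_A\right).
\]
The inner composite $\ran_{\yy^\sharp_A}\ff \circ \yy^\sharp_A$ collapses to $\ff$ by the dual of \ref{simplify} (using that $\yy^\sharp_A$ is fully faithful), which yields $\ran_{\gggg}\ff$ and completes the argument.

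The only place that demands care --- and the step I expect to be the main (minor) obstacle --- is the orientation of the adjunction. In the left Kan extension case the relevant functor $\lan_{\yy}\ff$ is a left adjoint, and the proof cites ``left adjoints preserve left Kan extensions''; here the copresheaf construction flips the nerve adjunction, so that $\ran_{\yy^\sharp_A}\ff$ is a right adjoint and it is instead ``right adjoints preserve right Kan extensions'' that applies. Once this flip is tracked correctly there is no further computation to perform: the proof is a faithful mirror image of \ref{pointwiselan}.
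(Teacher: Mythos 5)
Your proof is correct and is exactly what the paper intends: the paper offers no separate argument for \ref{pointwiseran} beyond the word ``Dually,'' and your write-up is the faithful dualization of the proof of \ref{pointwiselan}, correctly flipping the nerve adjunction so that $\ran_{\yy^\sharp_A}\ff$ is the \emph{right} adjoint in the conerve adjunction and then invoking \ref{preserve} and the dual of \ref{simplify}. No gaps.
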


\section*{Acknowledgements}
 The author is grateful to Ji\v{r}\'i Rosick\'{y} for his observations about the content of Rem.\ref{sharp}, to Ji\v{r}\'i Adámek for having read a draft of this preprint providing very useful comments and to Fosco Loregian for having introduced the author to the beauty of formal methods in category theory. The author is indebted with Peter Arndt for showing enthusiasm about the content of this paper and many comments that led to an improvement of the presentation. Finally, we thank the anonymous referee for several suggestions that improved the exposition and the correction to Rem. \ref{conversetrue} where the author was originally claiming that the limit could be taken in the category of algebras.

\newpage

\bibliography{thebib}
\bibliographystyle{alpha}

\end{document}